\pdfoutput=1
\documentclass[12pt,reqno,a4paper]{amsart}

\usepackage{graphicx}
\usepackage{color}
\usepackage{transparent}

\usepackage{mathrsfs}
\usepackage{amssymb,amsmath, amsfonts, amsthm}
\usepackage{latexsym}
\usepackage[dvips]{epsfig}


\oddsidemargin+0.0cm
\evensidemargin-0.5cm

\textheight=608pt  
\linespread{1.0}

\baselineskip=16pt

  \addtolength{\textwidth}{1cm}
  \addtolength{\textheight}{2.2cm}
  \addtolength{\topmargin}{-0.7cm}
  \setlength{\textwidth}{160mm}
 \setlength{\textheight}{220mm}

\newtheorem{theorem}{Theorem}[section]
\newtheorem{proposition}[theorem]{Proposition}
\newtheorem{lemma}[theorem]{Lemma}
\newtheorem{corollary}[theorem]{Corollary}
\newtheorem{remark}[theorem]{Remark}

\makeatletter\makeatother

\usepackage{epsfig,color,xcolor}
\newcommand{\rosso}[1]{\textcolor[rgb]{0.90,0.00,0.00}{#1}}

\newcommand{\ble}{\begin{lemma}}
\newcommand{\ele}{\end{lemma}}
\newcommand{\be}{\begin{equation*}}
\newcommand{\ee}{\end{equation*}}

\newcommand{\bel}{\begin{equation}}
\newcommand{\eel}{\end{equation}}

\DeclareMathOperator{\Id}{Id}

\newcommand{\al}{\alpha}

\newcommand{\ep}{\varepsilon}
\newcommand{\fr}{\frac }
\newcommand{\intopab}{\int_{\mathbb A_{p,\alpha}}}
\newcommand{\lap}{\Delta}
\newcommand{\ns}{4}
\newcommand{\N}{\mathbb{N}}
\newcommand{\na}{\nabla}
\newcommand{\Ompab}{\mathbb A_{p,\alpha}}

\newcommand{\p}{\fontsize{1}{1}{$+$}}
\newcommand{\sm}[1]{\fontsize{4}{4}{$#1$}}
\newcommand{\m}{\fontsize{1}{1}{$-$}}

\newcommand{\R}{\mathbb{R}}
\newcommand{\set}[1]{\left\{#1\right\}}
\newcommand{\arcup}[1]{\widehat{#1}}
\renewcommand{\to}{\rightarrow}
\newcommand{\To}{\longrightarrow}
\newcommand{\zpo}{\mathcal{Z}_{p,0}}

%
%
%
%
%
%
\def\sideremark#1{\ifvmode\leavevmode\fi\vadjust{\vbox to0pt{\vss
 \hbox to 0pt{\hskip\hsize\hskip1em
 \vbox{\hsize2.1cm\tiny\raggedright\pretolerance10000
  \noindent #1\hfill}\hss}\vbox to15pt{\vfil}\vss}}}%

\newcommand{\edz}[1]{\sideremark{#1}}
%
%

\begin{document}
\numberwithin{equation}{section}
\parindent=0pt
\hfuzz=2pt
\frenchspacing

\title[]{Sign changing solutions of Lane Emden problems with interior nodal line and semilinear heat equations}

\author[]{Francesca De Marchis, Isabella Ianni, Filomena Pacella}

\address{Francesca De Marchis, University of Rome {\em Tor Vergata}, Via della Ricerca
Scientifica 1, 00133 Rome, Italy}
\address{Isabella Ianni, Second University of Napoli, V.le Lincoln 5, 81100 Caserta, Italy}
\address{Filomena Pacella, University of Rome {\em Sapienza}, P.le Aldo Moro 8, 00185 Rome, Italy}

\thanks{2010 \textit{Mathematics Subject classification:} 35B05, 35B06, 35J91, 35K91. }

\thanks{ \textit{Keywords}: superlinear elliptic boundary value problem, sign-changing solution, parabolic flow, nodal set.}

\thanks{Research partially supported by FIRB project: {\sl Analysis and Beyond} and PRIN 2009-WRJ3W7 grant. }

\begin{abstract}
We consider the semilinear Lane Emden problem
\begin{equation}\left\{\begin{array}{lr}-\Delta u= |u|^{p-1}u\qquad  \mbox{ in }\Omega\\
u=0\qquad\qquad\qquad\mbox{ on }\partial \Omega
\end{array}\tag{$\mathcal E_p$}\right.
\end{equation}
where $\Omega$ is a smooth bounded simply connected domain in $\mathbb R^2$, invariant by the action of a finite symmetry group $G$. \\
We show that if the orbit of each point in $\Omega$, under the action of the group $G$, has cardinality greater than or equal to $\ns$ then, for $p$ sufficiently large, there exists a sign changing solution of \eqref{problem} with two nodal regions  whose nodal line does not touch $\partial\Omega$.\\
 This result is proved as a consequence of an analogous result for the associated parabolic problem.
\end{abstract}
\maketitle

\section{Introduction}
We consider the semilinear elliptic problem

\begin{equation}\label{problem}\left\{\begin{array}{lr}-\Delta u= |u|^{p-1}u\qquad  \mbox{ in }\Omega\\
u=0\qquad\qquad\qquad\mbox{ on }\partial \Omega
\end{array}\right.\tag{$\mathcal E_p$}
\end{equation}

where $\Omega\subseteq \mathbb R^2$ is a smooth bounded domain and $p>1.$

\

In  this paper we address the question of the existence of sign changing solutions of \eqref{problem} with two nodal domains and whose nodal line does not touch $\partial\Omega.$
By this we mean that, denoting by
\bel\label{Zp}\mathcal Z_p=\{x\in\Omega, \ u_p(x)=0\}\eel
the nodal set of $u_p$, then $\overline{ \mathcal Z_p}\cap\partial\Omega=\emptyset.$

\

Obviously if $\Omega$ is a ball such a solution exists for any $p>1$, just considering the least energy nodal radial solution of \eqref{problem}. Moreover, by symmetry considerations, it has been proved in \cite{AftalionPacella} that the radial one is not the least energy nodal solution in the whole space $H^1_0(\Omega)$ and that it has Morse index at least three (actually at least four).\\
Thus it is natural to ask whether a sign changing solution with an interior nodal line exists for other domains.
\\Note that nodal solutions of \eqref{problem}, $p$ large, of different type, in particular with the nodal line intersecting the boundary have been found in \cite{EMP}.
\

By imposing some symmetry on $\Omega$ we are able to prove the following theorem, assuming, without loss of generality, that the origin $O\in\Omega$.

\begin{theorem}\label{thm}
Assume that $\Omega$ is simply connected and invariant under the action of a finite group $G$ of orthogonal transformations of $\R^2$. If  $|Gx|\geq\ns$ for any $x\in\bar\Omega\setminus\{O\}$,
then, for $p$ sufficiently large \eqref{problem} admits a sign changing $G$-symmetric solution $u_p$, with two nodal domains, whose nodal line neither touches $\partial\Omega$, nor passes through the origin.
\end{theorem}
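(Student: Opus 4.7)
The plan is to establish existence of the sign-changing solution by a variational argument on the $G$-invariant Sobolev space $H^1_0(\Omega)^G$, using the associated parabolic flow as a deformation, and then exploit the hypothesis $|Gx|\geq\ns$ to rule out that the nodal line touches $\partial\Omega$ or passes through $O$. By the principle of symmetric criticality, $G$-invariant critical points of
\begin{equation*}
J_p(u)=\tfrac{1}{2}\int_\Omega|\nabla u|^2-\tfrac{1}{p+1}\int_\Omega|u|^{p+1}
\end{equation*}
on $H^1_0(\Omega)^G$ are solutions of \eqref{problem}, so one may stay entirely within the invariant space.

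To target solutions with exactly two nodal regions, I would introduce the $G$-symmetric nodal Nehari-type set
\begin{equation*}
\mathcal N_p^{\mathrm{sym}}=\{u\in H^1_0(\Omega)^G:u^\pm\not\equiv 0,\ J'_p(u)[u^+]=J'_p(u)[u^-]=0\}
\end{equation*}
and look for a minimizer of $J_p$ on $\mathcal N_p^{\mathrm{sym}}$. Direct minimization is delicate, since minimizing sequences could split into more than two nodal regions or degenerate to a one-signed limit; the role of the parabolic equation $\partial_t u-\Delta u=|u|^{p-1}u$ is to furnish a deformation that preserves $G$-invariance, decreases $J_p$, and, via maximum-principle arguments inside suitable invariant cones, preserves the sign of each connected component of $u^\pm$. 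Combined with the $H^1$-subcritical nature of the problem in dimension two (so Palais--Smale type compactness is available at fixed $p$), this should produce an attained minimizer $u_p$, namely a $G$-symmetric sign-changing solution with two nodal domains.

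The crucial step is then to show $\overline{\mathcal Z_p}\cap\partial\Omega=\emptyset$ and $O\notin\mathcal Z_p$. Since $u_p$ is $G$-invariant, so is $\mathcal Z_p$. If $\overline{\mathcal Z_p}$ contained a boundary point $x_0\neq O$, the whole orbit $Gx_0\subset\overline{\mathcal Z_p}\cap\partial\Omega$ would have cardinality $\geq\ns$; traversing $\partial\Omega$, the interior sign of $u_p$ alternates at each touching point, so the positive nodal region would meet $\partial\Omega$ along at least two disjoint arcs, and because $\Omega$ is simply connected and both nodal regions must remain connected, this forces a topological crossing of interior arcs inside $N^+$ and $N^-$ --- contradicting the two-nodal-region property of $u_p$. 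A parallel argument rules out $O\in\mathcal Z_p$: the Bers-type local expansion of $u_p$ at $O$ involves a $G$-invariant leading harmonic polynomial, and $G$-invariance together with $|Gx|\geq\ns$ for $x$ near $O$ forces the number of emanating nodal rays to be at least $\ns$, yielding too many local nodal sectors.

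The main obstacle I anticipate is the attainment step, together with the guarantee that the minimizer really has exactly two nodal regions rather than concentrating or splitting. This is where the parabolic flow and its sign-preserving properties on connected nodal components play the most important role, aided by the large-$p$ asymptotic analysis of nodal solutions of \eqref{problem} in the plane (developed earlier in the paper), which should separate the energy level of the two-region minimum from that of three-or-more-region competitors, thereby trapping $u_p$ in the correct topological class.
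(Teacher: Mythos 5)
Your overall strategy (work $G$-equivariantly, get a sign-changing solution, then use $|Gx|\geq 4$ to exclude boundary touching and the origin) is in the spirit of the paper, but two essential steps are missing or rest on claims that are not available. First, the attainment step. You ask the parabolic flow to ``preserve the sign of each connected component of $u^\pm$'' via maximum-principle/invariant-cone arguments; outside the radial setting there is no zero-number property, and this is precisely what the paper says it \emph{cannot} prove (it cannot even control the number of nodal domains along the flow, only in the $\omega$-limit). The alternative you sketch, minimizing $J_p$ on the $G$-symmetric nodal Nehari set, does not by itself give two nodal domains either: the standard ``restrict to two nodal domains'' argument requires the truncated function to stay in $H^1_0(\Omega)^G$, i.e.\ it only lets you discard whole $G$-orbits of nodal domains, so the symmetric minimizer could a priori have an orbit of many positive regions and an orbit of many negative ones. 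The paper breaks this circle with a quantitative ingredient you never actually produce: the explicit $G$-invariant competitor built from the radial solution on the shrinking ball $\mathbb B_{p,\alpha}$ and the radial solution on the annulus with inner radius $e^{-\alpha p}$, whose energy satisfies $pE_p\leq 4.97\cdot 4\pi e+\epsilon$ (Proposition \ref{lemmaStimeDatiIniziali} and Corollary \ref{PropositionEnergyUpperBound}), against the lower bound $4\pi e-\epsilon$ per nodal domain (Proposition \ref{PropositionEnergyLowerBound}). This is what caps the number of nodal domains at four and makes the symmetry hypothesis bite; invoking ``large-$p$ asymptotics should separate the energy levels'' does not replace this construction, and even then the paper must iterate (restarting the flow from the restriction to two nodal regions) because the first $\omega$-limit may have three or four regions.

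Second, the exclusion arguments are incomplete as stated. At a boundary touching point or at $O$, symmetry and the local structure of the nodal set (Hartman--Wintner/Bers expansion, Hopf lemma, strong maximum principle) give you at least $h\geq 4$ touching points or local nodal sectors, but local multiplicity does not directly contradict having few \emph{global} nodal domains: distinct local sectors of the same sign could be connected far away. The paper's Lemmas \ref{lemmaA}--\ref{lemmaC} do the real work here: they first show each nodal region of a solution with at most four nodal domains is itself $G$-symmetric, and then run planar crossing arguments (curves $\Gamma^\pm$ and their rotates must intersect) to show that identifying some of the symmetric copies forces the opposite-sign regions to be pairwise disjoint, producing at least $h+1\geq 5$ nodal domains and hence a contradiction with the energy bound. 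Your phrases ``the interior sign alternates at each touching point'' and ``too many local nodal sectors'' gesture at this but skip both the $G$-symmetry of individual nodal regions and the crossing argument, which are the nontrivial parts. So the proposal identifies the right difficulties but leaves the two load-bearing ingredients of the proof---the sharp energy upper bound via the ball-plus-annulus construction, and the symmetry-plus-topology lemmas converting local information into a count of global nodal domains---unproved.
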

In the above statement by $|Gx|$ we mean the cardinality of the orbit of the point $x$ under the action of the group $G$.

\begin{remark}\label{rmkGruppo}
Note that by Theorem 3.4 of \cite{Artin}  the above hypothesis on the group is equivalent to ask that $G=C_h$ or $G=D_h$ for some $h\geq \ns$, where
$C_h$ is the cyclic group of order $h$ generated by a rotation of $\frac{2\pi}{h}$ and  $D_h$ is the dihedral group of order $2h,$ generated by a rotation of $\frac{2\pi}{h}$ and a reflection about a line through the origin.
\end{remark}

The point of view taken in this article to prove Theorem \ref{thm} is to study an initial value problem for the associated semilinear heat equation and prove that, for $p$ large, it is possible to construct an initial datum for which the solution is global (in time), changes sign (at every time) and the corresponding $\omega$-limit set is nonempty and consists of solutions of \eqref{problem} having the desired properties.
\\
More precisely we consider the semilinear parabolic problem

\begin{equation}\label{ParabolicProblem}\left\{\begin{array}{ll}v_t-\Delta v= |v|^{p-1}v& \mbox{ in }\Omega\times (0,+\infty)\\
v=0&\mbox{ on }\partial \Omega\times [0,+\infty)\\
v(\cdot, 0)=v_0& \mbox{ in }\Omega
\end{array}\tag{$\mathcal P_p$}\right.
\end{equation}
and prove the following result.

\begin{theorem}\label{teoremaPrincipale}
Let the domain $\Omega$ and the group $G$ be as in Theorem \ref{thm}. Then there exists $p_0>1$ such that for any $p\geq p_0$ there exists a $G$-symmetric function  $v_0=v_{p,0}\in H^1_0(\Omega)$ having two nodal domains such that the corresponding solution $v_p(x, t)$ of \eqref{ParabolicProblem} is global, sign changing for every fixed $t,$ the $\omega$-limit set $\omega(v_0)$ is nonempty  and any function $u_p\in\omega(v_0)$ is a $G$-symmetric nodal solution of \eqref{problem} with two nodal domains and whose nodal line neither touches $\partial\Omega$, nor passes through the origin.
\end{theorem}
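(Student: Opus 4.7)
The plan is to use the parabolic flow associated with \eqref{ParabolicProblem} as a descent method for the Lyapunov functional
\[
J_p(u)=\frac{1}{2}\int_\Omega|\nabla u|^2\,dx-\frac{1}{p+1}\int_\Omega|u|^{p+1}\,dx,
\]
which along trajectories satisfies $\frac{d}{dt}J_p(v(\cdot,t))=-\|v_t(\cdot,t)\|_{L^2}^2\le 0$. Throughout, I would work in the closed subspace $H^1_{0,G}(\Omega)$ of $G$-symmetric functions, which is invariant under the flow because the equation is equivariant and $\Omega$ is $G$-invariant. The target is to exhibit an initial datum $v_0\in H^1_{0,G}(\Omega)$ with exactly two nodal domains whose orbit is globally defined, stays sign-changing at every time, and accumulates at a stationary solution with the prescribed nodal structure.

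\emph{Construction of $v_0$.} I would look at the $G$-symmetric nodal Nehari-type infimum
\[
\beta_p^G=\inf\bigl\{J_p(u):\ u\in H^1_{0,G}(\Omega),\ u^{\pm}\not\equiv 0,\ \langle J_p'(u),u^{+}\rangle=\langle J_p'(u),u^{-}\rangle=0\bigr\}.
\]
Standard Bartsch--Weth type arguments show that $\beta_p^G$ is attained by a $G$-symmetric nodal solution $w_p$ of \eqref{problem} with exactly two nodal regions: more nodal regions would contradict minimality by a truncation and rescaling on the Nehari set. Taking $v_0=w_p$ (or a small perturbation), the $G$-invariance combined with the orbit condition $|Gx|\ge\ns$ should prevent the nodal line of $w_p$ from touching $\partial\Omega$ or passing through $O$: indeed any such degenerate configuration would, by symmetry, split $\Omega$ into at least four symmetric sectors, forcing the energy above $\beta_p^G$ for $p$ large via an asymptotic analysis of the concentrating profile \`a la Ren--Wei.

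\emph{Dynamics and $\omega$-limit.} With $J_p(v_0)=\beta_p^G$ the Lyapunov identity gives $J_p(v_p(\cdot,t))\le\beta_p^G$ for all $t$, and the Nehari-type identity $\langle J_p'(v),v\rangle$ translates this into a uniform $H^1_0$ bound along the orbit. This yields global existence (the orbit cannot blow up in finite time), boundedness in $H^1_0$, and hence nonemptiness of $\omega(v_0)$ by standard parabolic regularity and compactness. The elements of $\omega(v_0)$ are $G$-symmetric stationary solutions by LaSalle's invariance principle. To preserve the sign-change along the flow, I would establish the strict comparison
\[
\beta_p^G<2\alpha_p^G\qquad\text{for }p\ \text{large},
\]
where $\alpha_p^G$ is the least-energy one-signed $G$-symmetric level. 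Combined with an analysis on the super-Nehari set $\{u:\langle J_p'(u),u^{\pm}\rangle\ge 0\}$, which is shown to be positively invariant under the flow starting from $v_0$, this prevents $v_p(\cdot,t)^{\pm}$ from vanishing: a splitting argument would otherwise push $J_p(v_0)$ above $2\alpha_p^G$. Passing to the limit along any $t_n\to+\infty$ then produces $u_p\in\omega(v_0)$ which is $G$-symmetric, sign-changing, and, by upper semicontinuity of the number of nodal domains combined with the strict energy gap, has exactly two nodal domains with interior nodal line away from $O$.

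\emph{Main obstacle.} The technical heart of the argument is the strict inequality $\beta_p^G<2\alpha_p^G$ together with the quantitative description of $w_p$ for large $p$, which demands a sharp asymptotic expansion of both levels as $p\to+\infty$: the positive $G$-symmetric ground state concentrates on a single $G$-orbit of peaks, while the $G$-symmetric nodal minimizer concentrates with positive and negative peaks paired by the action of $G$. Establishing these expansions rigorously in the $G$-symmetric class, and in particular showing that the cardinality condition $|Gx|\ge\ns$ is precisely what is needed to guarantee the strict gap and to locate the peaks strictly inside $\Omega$ away from $O$, is the principal obstacle I would expect to spend the bulk of the proof on; once this quantitative comparison is in hand the remaining parabolic arguments are fairly standard.
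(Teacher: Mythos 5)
Your proposal is essentially circular where the paper does its real work. You take $v_0=w_p$, a minimizer of the $G$-symmetric nodal Nehari level $\beta_p^G$, so the flow is stationary and the whole theorem reduces to the elliptic claims about $w_p$ (exactly two nodal domains, nodal line away from $\partial\Omega$ and from $O$); but those claims are precisely Theorem \ref{thm}, and you defer them to ``standard Bartsch--Weth arguments'' plus sharp asymptotic expansions of $\alpha_p^G,\beta_p^G$ which you acknowledge you do not have. The paper never needs asymptotics of minimizers: its quantitative input is an explicit $G$-symmetric pair of test functions, the positive radial solutions in the shrinking annulus $\mathbb A_{p,\alpha}$ and the tiny ball $\mathbb B_{p,\alpha}$, whose energies are estimated by hand (Proposition \ref{lemmaStimeDatiIniziali}) to give $pE_p\le 4.97\cdot 4\pi e+\epsilon$ (Corollary \ref{PropositionEnergyUpperBound}); combined with the lower bound $4\pi e-\epsilon$ per nodal domain (Proposition \ref{PropositionEnergyLowerBound}) this caps the number of nodal regions at four, and the planar topological lemmas (Lemmas \ref{lemmaA}--\ref{lemmaC}) show that for a $G$-symmetric solution with at most four regions a nodal line through $O$ or touching $\partial\Omega$ would create at least $h+1\ge 5$ regions. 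Note your count ``at least four symmetric sectors'' is off by one: with an upper bound strictly between $4\cdot 4\pi e$ and $5\cdot 4\pi e$ you need five regions for a contradiction, which is exactly why the hypothesis is $|Gx|\ge 4$; moreover an iteration step is still needed, since the $\omega$-limit may a priori have three or four regions and the paper restarts the flow from two opposite-sign regions of $u_p$.

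The dynamical mechanism you propose also has gaps. The inequality $\beta_p^G<2\alpha_p^G$ is unjustified and almost certainly false for $p$ large: a single bump concentrated at the origin is admissible in $H^G$, so $p\,\alpha_p^G\to 4\pi e$, while any $G$-symmetric nodal Nehari function has two nodal sets each carrying at least $4\pi e-\epsilon$ and, by symmetry, they cannot both concentrate at $O$, so $p\,\beta_p^G$ stays at or above $8\pi e$ (strictly above, given the annular or multi-peak cost of the second component); thus no strict gap is available. The claimed positive invariance of $\{u:\langle J_p'(u),u^{\pm}\rangle\ge 0\}$ under the heat semiflow is not a known fact and no argument is given; likewise, an upper bound on $J_p$ along the flow does not yield an $H^1_0$ bound or global existence, since $J_p$ is unbounded below and blow-up trajectories have decreasing energy. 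The paper's route is different: it selects the initial datum by a Krasnoselskii-genus argument on $\partial\mathcal A_p^{\star}$ intersected with the plane spanned by the two bumps (Theorem \ref{theoremSolution}), which simultaneously gives global existence, compactness of the orbit, a nonempty $\omega$-limit, and the fact that the trajectory avoids the one-signed set $\mathcal A_p^1$ for all times, the only PDE inputs being the parabolic maximum principle and the Hopf lemma (Lemma \ref{lemmaSoluzioniInterne}, Proposition \ref{propositionInterno}). Without such a selection, nothing in your argument prevents the trajectory from falling into the basin of attraction of $0$ or becoming one-signed.
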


\

Obviously the result of Theorem \ref{thm} follows from Theorem \ref{teoremaPrincipale}.

\

The are several motivations to choose the parabolic approach. A first one is that, the study of \eqref{ParabolicProblem} and, in particular, the analysis of qualitative properties of the solutions of \eqref{ParabolicProblem} is interesting in itself. Another one is to show that the sign changing solution of \eqref{problem} that we get in Theorem \ref{thm} arises as a \emph{natural} evolution, by the heat flow, of a suitable ``initial'' function that we construct.
\\
Moreover we believe that the two problems \eqref{problem} and \eqref{ParabolicProblem}, which most of the time are analyzed in independent ways, have several common features.

\

One of the important steps in proving Theorem \ref{teoremaPrincipale} is the choice of the initial datum $v_{p,0}$. To do this we select a proper linear combination of  the positive radial solution of \eqref{problem} in the annulus
\bel\label{Apa}
\Ompab:=\{x\in\mathbb R^2 \ : \ e^{-\alpha p}<\ |x|\ < \ b\} \subset\Omega
\eel
and the negative radial solution of \eqref{problem} in the ball
\bel\label{Bpa}
\mathbb B_{p,\alpha}:=\{x\in\mathbb R^2 \ : \ |x|<e^{-\alpha p}\} \subset\Omega
\eel
for $\alpha>0$ and $b>0$ suitable chosen. We obtain in this way a function $v_{p,0}$ which is $G$-invariant, has two nodal regions and the nodal line does not intersect $\partial\Omega.$

Then to prove that the trajectory starting from $v_{p,0}$ ends up with a solution of \eqref{problem} having the same properties a good estimate from above of the energy of $v_{p,0}$, for $p$ large, is crucial. This is a delicate and nontrivial point of the proof (see Proposition \ref{lemmaStimeDatiIniziali}). It allows to prove, together with an estimate from below of the energy in each nodal region, and with the assumption on the symmetry group, namely $|G x|\geq \ns, \ \forall x\in\bar\Omega\setminus\{O\}$, that if the nodal line of the solution $u_p$ in $\omega(v_{p,0})$ touched the boundary or passed through the origin then too many nodal regions would be created and so the energy of $u_p$ would exceed that of $v_{p,0}$ which is not possible.
\\
For this last step as well as to show that the solution $v_p(x,t)$ of \eqref{ParabolicProblem}  does  change sign, $\forall t>0$,  some topological argument is needed.

\

We point out that a stronger result would be to show that along the trajectory, i.e. $\forall \ t\in (0,+\infty)$ the solution $v_p(x,t)$ has always two nodal regions and its nodal line does not touch the boundary or passes through the origin. We believe that this should be true (for energy reasons!) but we are not able to prove it at this stage.

\

We also remark that the same energy estimates for $v_{p.0}$ that we obtain here show that,  under the same assumption as in Theorem \ref{thm}, the least energy nodal solution in the subspace $H^G$ of $H^1_0(\Omega)$ of $G$-invariant functions has two nodal regions and the nodal line neither touches the boundary nor passes through the origin, if $p$ is sufficiently large.
We observe that for this minimality property some assumption on the action of the symmetry group $G$ is needed. Indeed in $2$-dimension the  least energy nodal solution of \eqref{problem} in the ball has a symmetry hyperplane (see \cite{BartschWethWillem, PacellaWeth}) but the nodal line touches the boundary (\cite{AftalionPacella}).

\

Another important issue is to have information about the Morse index $m(u_p)$ of the solution constructed in Theorem \ref{thm}. As mentioned before for least energy nodal radial solution in the ball the Morse index is at least three (\cite{AftalionPacella}). By using the same ideas as in \cite{AftalionPacella}, under an additional hypothesis on the symmetry group  we also get that $m(u_p)\geq3$.

\begin{theorem}\label{teoremaMorseIndex}
Under the same assumptions of Theorem \ref{thm}, if $G$ contains a reflection with respect to a line $T$ through the origin (namely $G=D_h$, for some $h\geq\ns$) and $\Omega$ is convex in the direction orthogonal to $T$, then $m(w_p)\geq3$, where $w_p$ is any $G$-symmetric solution of \eqref{problem} whose nodal line does not touch the boundary of $\Omega$.
\end{theorem}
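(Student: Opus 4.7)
Since $G=D_h$ contains the reflection $\sigma$ across $T$ and $w_p\circ\sigma=w_p$, the linearised operator $L_p:=-\Delta-p|w_p|^{p-1}$ preserves the orthogonal decomposition $H^1_0(\Omega)=H^+\oplus H^-$ into $\sigma$-symmetric and $\sigma$-antisymmetric functions, so $m(w_p)=m^+(w_p)+m^-(w_p)$, where $m^\pm$ counts the negative eigenvalues of $L_p$ restricted to $H^\pm$. Let $e$ be a unit vector orthogonal to $T$ and set $\Omega^+:=\Omega\cap\{x\cdot e>0\}$. The map $v\mapsto v|_{\Omega^+}$ identifies $H^-$ with $H^1_0(\Omega^+)$ (up to a constant factor) and intertwines the quadratic form of $L_p$ on $H^-$ with its Dirichlet realisation on $\Omega^+$; in particular $m^-(w_p)$ equals the Morse index of the Dirichlet problem for $L_p$ on $\Omega^+$. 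The plan is to prove $m^+(w_p)\ge 2$ and $m^-(w_p)\ge 1$, which together give the claim; if $w_p$ had three or more nodal regions the bound would already follow from the classical argument below, so the non-trivial case is that of two nodal regions.

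For $m^+\ge 2$, I would first observe that each of the two nodal regions $\Omega_1,\Omega_2$ is $\sigma$-invariant: $\sigma$ permutes them, and a swap would force $w_p$ to take opposite signs on a set that is $w_p$-symmetric under $\sigma$, contradicting $w_p\circ\sigma=w_p$. Hence $w_p\chi_{\Omega_i}\in H^+$ for $i=1,2$, they are linearly independent with disjoint supports, and testing the equation against them yields
\begin{equation*}
\int_\Omega\bigl(|\nabla(w_p\chi_{\Omega_i})|^2-p|w_p|^{p-1}(w_p\chi_{\Omega_i})^2\bigr)\,dx=(1-p)\int_{\Omega_i}|w_p|^{p+1}\,dx<0,
\end{equation*}
so they span a two-dimensional subspace of $H^+$ on which $L_p$ is negative definite.

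For $m^-\ge 1$, following the strategy of Aftalion--Pacella I would use $\varphi:=\partial_e w_p$: by $\sigma$-antisymmetry $\varphi=0$ on $T\cap\Omega$; by the Lane--Emden equation $L_p\varphi=0$ in $\Omega$; and on $\partial\Omega\cap\overline{\Omega^+}$, $\varphi=(e\cdot\nu)\partial_\nu w_p$ because the tangential derivatives of $w_p$ vanish there. Although $\varphi\notin H^1_0(\Omega^+)$, Green's identity together with $L_p\varphi=0$ and $\varphi|_{T\cap\Omega}=0$ yields
\begin{equation*}
\int_{\Omega^+}\bigl(|\nabla\varphi|^2-p|w_p|^{p-1}\varphi^2\bigr)\,dx=\int_{\partial\Omega\cap\overline{\Omega^+}}\varphi\,\partial_\nu\varphi\,d\sigma,
\end{equation*}
and a computation in Fermi coordinates along $\partial\Omega$, using $w_p=0$ there and the induced identity $\partial^2_\nu w_p=-\kappa\,\partial_\nu w_p$ (with $\kappa$ the curvature of $\partial\Omega$), rewrites the integrand as a curvature-weighted multiple of $(e\cdot\nu)^2(\partial_\nu w_p)^2$ plus a tangential total derivative whose integral contributes only endpoint terms on $T$; these vanish since $\varphi|_T=0$. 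The one-directional convexity of $\Omega$ ensures $(e\cdot\nu)\ge 0$ on $\partial\Omega\cap\Omega^+$ and provides the correct signs, while Hopf's lemma applied to the nodal region of $w_p$ abutting $\partial\Omega$ guarantees $\partial_\nu w_p\not\equiv 0$, making the boundary integral strictly negative. Finally, to upgrade $\varphi$ to an element of $H^1_0(\Omega^+)$ with negative quadratic form one subtracts an $L_p$-harmonic extension of its boundary data, or truncates in a narrow collar of $\partial\Omega$ while controlling the error terms. The main obstacle is precisely this boundary analysis: checking that \emph{directional} convexity of $\Omega$ (rather than full convexity, as in the radial case) already produces the correct sign, through the interplay between the tangential integration by parts, the endpoint vanishing $\varphi|_T=0$, and the non-vanishing of $\partial_\nu w_p$ furnished by Hopf's lemma.
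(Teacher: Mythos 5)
Your overall architecture (splitting into $\sigma$-even and $\sigma$-odd parts, getting two negative directions from the sign change and one from the half-domain problem) matches the paper's, and the even part is fine -- indeed the paper gets its two negative $G$-symmetric eigenvalues in essentially your way (you could avoid the discussion of invariance of nodal regions altogether by testing with the positive and negative parts $w_p^\pm$, which are automatically $G$-symmetric). The genuine gap is in the step $m^-\geq 1$. If you carry out your Green/Fermi computation for $\varphi=\partial_e w_p$ on $\Omega^+$, the tangential integration by parts (using $\partial_s(e\cdot\nu)=\kappa\,(e\cdot t)$, $\partial_s(e\cdot t)=-\kappa\,(e\cdot\nu)$ and $\partial^2_\nu w_p=-\kappa\,\partial_\nu w_p$) produces
\begin{equation*}
\int_{\partial\Omega\cap\overline{\Omega^+}}\varphi\,\partial_\nu\varphi\,ds=-\tfrac12\int_{\partial\Omega\cap\overline{\Omega^+}}\kappa\,(\partial_\nu w_p)^2\,ds
\end{equation*}
(the endpoint terms on $T$ vanish because $e\cdot\nu=0$ there by symmetry). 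The sign of this quantity is governed by the curvature $\kappa$, not by $e\cdot\nu$; the hypothesis is only convexity \emph{in the direction orthogonal to $T$}, which allows $\kappa<0$ on parts of $\partial\Omega$, so the claim that $(e\cdot\nu)\geq 0$ ``provides the correct signs'' is not justified and the boundary term can a priori be positive. Your route really needs full convexity. On top of this, even with a favorable sign one still has $\varphi\notin H^1_0(\Omega^+)$, and the proposed repairs (subtracting an $L_p$-harmonic extension, or truncating in a collar) are exactly the delicate part and are not carried out.

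The paper avoids all boundary integrals: directional convexity is used only to get the pointwise sign $\partial_e w_p=(e\cdot\nu)\,\partial_\nu w_p\geq 0$ on $\partial\Omega\cap\partial\Omega^-$ (here the interior nodal line is what guarantees $w_p$ has a fixed sign near $\partial\Omega$, so $\partial_\nu w_p\leq0$ there), while $\partial_e w_p=0$ on $T\cap\Omega$ by evenness of $w_p$. Since $w_p$ changes sign and is even in $e$, $\partial_e w_p$ must be negative somewhere in $\Omega^-$, so it has a nodal domain $D$ with $\overline D$ not meeting $\{\partial_e w_p>0\}\cup\partial\Omega^-$; on $D$ it is a one-signed Dirichlet eigenfunction of $L=-\Delta-p|w_p|^{p-1}$ with eigenvalue $0$, hence the first Dirichlet eigenvalue of $L$ on $D$ is $0$, and strict domain monotonicity forces the first Dirichlet eigenvalue $\mu$ of $L$ on $\Omega^-$ to be negative. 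The odd extension of the corresponding positive eigenfunction (Proposition 2.1 of \cite{AftalionPacella}) is then a genuine eigenfunction on $\Omega$ with negative eigenvalue, odd with respect to $T$, hence independent of the two $G$-symmetric negative directions. This maximum-principle/eigenvalue-monotonicity argument is precisely what makes mere directional convexity sufficient, and it is the missing idea in your proposal.
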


\

Finally it would be interesting to study the asymptotic behavior as $p\to\infty$ of the solutions $u_p$ constructed in Theorem \ref{thm}. In view of the geometric properties of our solutions we believe that the behavior should be similar to that of the radial solutions in the ball studied in \cite{GrossiGrumiauPacella}. We plan to analyze this question in a future paper.

\

Further comments will be delayed to the specific sections.\\
The outline of the paper is as follows. In Section \ref{SectionEnergyEstimates} we define the functions used to construct the initial datum and prove the crucial energy estimates. In Section \ref{sectionParabolic} we recall some known properties of the semilinear heat flow and prove a preliminary result about sign changing solutions of \eqref{ParabolicProblem}. Finally in Section \ref{sectionProofs} we prove Theorem \ref{teoremaPrincipale} and Theorem \ref{teoremaMorseIndex}.

\section{Energy estimates}\label{SectionEnergyEstimates}
Let us consider in the Sobolev space $H^1_0(\Omega)$ the Nehari manifold
$$
\mathcal N_p:=\{u\in H^1_0(\Omega)\setminus\{O\}:\|\nabla u\|^2_2=\|u\|_{p+1}^{p+1}\}
$$
and the energy functional
$$
E_p(u):=\frac{1}{2}\|\nabla u\|^2_{2}-\frac{1}{p+1}\|u\|_{p+1}^{p+1}
$$
associated to problem \eqref{problem}, for $p>1$.

\

We start with a simple lemma.

\begin{lemma}\label{lemmaCombinazioneElementiNehari}
Let $u_1, u_2\in \mathcal N_p,$ $supp\  u_1\cap supp\ u_2=\emptyset.$
 Then $$E_p(t_1u_1+t_2u_2)\leq E_p(u_1)+E_p(u_2)\qquad \mbox{ for all }t_1, t_2\in\mathbb R.$$
\end{lemma}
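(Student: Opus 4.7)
The plan is to exploit the disjoint supports of $u_1$ and $u_2$ to decouple the energy $E_p(t_1 u_1 + t_2 u_2)$ into a sum of two one-variable expressions, and then reduce the inequality to an elementary optimization of a scalar function.

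First, since $\mathrm{supp}\, u_1 \cap \mathrm{supp}\, u_2 = \emptyset$, the cross terms in the gradient norm and in the $L^{p+1}$ norm vanish, giving
\[
\|\nabla(t_1 u_1 + t_2 u_2)\|_2^2 = t_1^2\|\nabla u_1\|_2^2 + t_2^2\|\nabla u_2\|_2^2,
\]
\[
\|t_1 u_1 + t_2 u_2\|_{p+1}^{p+1} = |t_1|^{p+1}\|u_1\|_{p+1}^{p+1} + |t_2|^{p+1}\|u_2\|_{p+1}^{p+1}.
\]
Next, using that $u_i \in \mathcal N_p$, i.e. $\|\nabla u_i\|_2^2 = \|u_i\|_{p+1}^{p+1} =: A_i$, I can write
\[
E_p(t_1 u_1 + t_2 u_2) = f(t_1)\, A_1 + f(t_2)\, A_2, \qquad E_p(u_i) = f(1)\, A_i,
\]
where $f : \mathbb R \to \mathbb R$ is the scalar function
\[
f(t) := \frac{t^2}{2} - \frac{|t|^{p+1}}{p+1}.
\]

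Thus the statement of the lemma reduces to showing $f(t) \leq f(1)$ for every $t \in \mathbb R$. Since $f$ is even, it is enough to work on $[0,+\infty)$, where $f'(t) = t - t^p = t(1 - t^{p-1})$ is nonnegative on $[0,1]$ and nonpositive on $[1,+\infty)$; hence $t = 1$ is the global maximum of $f$ on $\mathbb R$, with $f(1) = \frac{p-1}{2(p+1)}$. Combined with $A_1, A_2 \geq 0$, this yields $f(t_i) A_i \leq f(1) A_i$ for $i=1,2$ and therefore $E_p(t_1 u_1 + t_2 u_2) \leq E_p(u_1) + E_p(u_2)$.

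There is no real obstacle here: the proof is a direct two-step verification, the only mild point being to notice the absolute value in the second term of $f$ so that the argument covers negative $t_i$ as well. No use of the ambient geometry or of $p$ being large is required.
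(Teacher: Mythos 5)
Your proof is correct and follows essentially the same route as the paper: decouple the energy via the disjoint supports, use the Nehari condition to write everything in terms of $\|\nabla u_i\|_2^2$, and bound the scalar factor $\frac{t^2}{2}-\frac{|t|^{p+1}}{p+1}$ by its maximum value $\frac12-\frac{1}{p+1}$ at $t=1$. The only difference is that you spell out the elementary calculus step that the paper leaves implicit.
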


\begin{proof}
\begin{eqnarray*}&&
E_p(t_1u_1+t_2u_2)=E_p(t_1u_1)+E_p(t_2u_2)=\sum_{i=1}^2 \left( \frac{t_i^2}{2}\|\nabla u_i\|^2_{2}-\frac{|t_i|^{p+1}}{p+1}\|u_i\|_{p+1}^{p+1}\right)\\
&=&
\sum_{i=1}^2 \left( \frac{t_i^2}{2}-\frac{|t_i|^{p+1}}{p+1}\right)\|\nabla u_i\|^2_{2}
\leq   \sum_{i=1}^2  \left( \frac{1}{2}-\frac{1}{p+1}\right)  \|\nabla u_i\|^2_{2}=E_p(u_1)+E_p(u_2).
\end{eqnarray*}
\end{proof}

\

Now let us denote by $B_b$ a ball centered at the origin with radius $b>0$ such that $B_b\subset \Omega$ and consider the annulus $\mathbb A_{p,\alpha}$ and the ball $\mathbb B_{p,\alpha}$ defined in \eqref{Apa} and \eqref{Bpa}.

The following proposition plays a crucial role in proving our results.

\begin{proposition}\label{lemmaStimeDatiIniziali}
For $\alpha >0$ let $u_{p,1,\alpha}$ be the unique positive radial solution  to \eqref{problem}  in $\Ompab$ and
$u_{p,2,\alpha}$ be the unique positive radial solution to \eqref{problem}  in $\mathbb B_{p,\alpha}$ .
Then $u_{p,1,\alpha}, u_{p,2,\alpha} \in \mathcal N_p$ and
for any $\epsilon>0$ there exists $p_{\epsilon}$ such that for $p\geq p_{\epsilon}$
$$ p E_p( u_{p,1,\alpha})\leq 4\pi e\frac{e^{2\alpha-1}}{\alpha}+\epsilon ,$$
$$p E_p( u_{p,2,\alpha}) \leq 4\pi e^{4\alpha+1}+\epsilon.$$
\end{proposition}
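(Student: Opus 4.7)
\emph{Plan.} The two bounds are proved separately, each exploiting the radial symmetry of the underlying domain and the corresponding Nehari-level characterization of $u_{p,i,\alpha}$.

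\emph{Ball case.} For $u_{p,2,\alpha}$ I would use the scale-invariance of the Lane-Emden exponent. Denoting by $V_p$ the (unique) positive radial solution of \eqref{problem} on the unit ball $B_1$, a direct check shows
\[
u_{p,2,\alpha}(x)=e^{2\alpha p/(p-1)}V_p(e^{\alpha p}x),
\]
and membership in $\mathcal N_p$ is preserved under this scaling. A change of variables yields $\|\nabla u_{p,2,\alpha}\|_2^2=e^{4\alpha p/(p-1)}\|\nabla V_p\|_2^2$, so that
\[
pE_p(u_{p,2,\alpha})=\tfrac{p-1}{2(p+1)}\cdot p\,\|\nabla u_{p,2,\alpha}\|_2^2=e^{4\alpha p/(p-1)}\cdot pE_p(V_p).
\]
Invoking the classical sharp asymptotic $pE_p(V_p)\to 4\pi e$ as $p\to\infty$, known for the least-energy positive Lane-Emden solution on planar domains (Ren--Wei) and sharpened for radial ground states on balls (Grossi--Grumiau--Pacella), and using $e^{4\alpha p/(p-1)}\to e^{4\alpha}$, gives $pE_p(u_{p,2,\alpha})\to 4\pi e^{4\alpha+1}$, from which the claim follows for $p\geq p_\epsilon$.

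\emph{Annulus case.} For $u_{p,1,\alpha}$ I would exploit that, by uniqueness of the positive radial solution on $\Ompab$, it coincides with the mountain-pass level of $E_p$ restricted to the radial Nehari class. Hence it is enough to bound $E_p(v^{\ast})$ from above for a well-chosen radial test function $v\in H^1_0(\Ompab)$, where $v^{\ast}=s(v)\,v$ is the Nehari projection with $s(v)^{p-1}=\|\nabla v\|_2^2/\|v\|_{p+1}^{p+1}$. Passing to the logarithmic radial coordinate $t=-\log|x|$ sends $\Ompab$ to the interval $(-\log b,\alpha p)$, and if $v(x)=\phi(-\log|x|)$ then
\[
\|\nabla v\|_2^2=2\pi\!\int_{-\log b}^{\alpha p}\!(\phi')^2\,dt,\qquad \|v\|_{p+1}^{p+1}=2\pi\!\int_{-\log b}^{\alpha p}\!|\phi|^{p+1}e^{-2t}\,dt.
\]
I would take $\phi$ to be the tent function of peak height $1$ at the log-midpoint $t_0=\alpha p/2$, linearly descending to $0$ at both endpoints. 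Then $\int(\phi')^2\,dt=\tfrac{4}{\alpha p}(1+o(1))$, while the weighted $L^{p+1}$-integrand $\phi^{p+1}e^{-2t}$ concentrates near $t_0$; a Laplace-type expansion using $\phi(t)\approx 1-|t-t_0|/(\alpha p/2)$ near $t_0$ together with $e^{-2t_0}=e^{-\alpha p}$ yields
\[
\int|\phi|^{p+1}e^{-2t}\,dt=\frac{\alpha\,e^{-\alpha p}}{1-\alpha^2}(1+o(1))
\]
for $\alpha\in(0,1)$ (and an at least as favorable bound for $\alpha\geq 1$). Consequently $s(\phi)^{p-1}\sim 4(1-\alpha^2)e^{\alpha p}/(\alpha^2 p)$, whence $\log s(\phi)\to\alpha$ and $s(\phi)^2\to e^{2\alpha}$, so that
\[
pE_p(v^{\ast})=\frac{p(p-1)}{2(p+1)}\,s(\phi)^2\,\|\nabla v\|_2^2\;\To\;\tfrac{1}{2}\cdot e^{2\alpha}\cdot\tfrac{8\pi}{\alpha}=4\pi e\cdot\frac{e^{2\alpha-1}}{\alpha},
\]
which is the desired bound.

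\emph{Main obstacle.} The delicate step is the sharp Laplace-type asymptotic for $\int\phi^{p+1}e^{-2t}\,dt$: one must track the precise balance between the concentration of $\phi^{p+1}$ at $t_0$ and the exponentially decreasing weight $e^{-2t}$, since it is the resulting $e^{-\alpha p}$ that, after the Nehari projection, inflates to the factor $e^{2\alpha}$ in the statement. In particular, the choice $t_0=\alpha p/2$ (the midpoint in log-scale) is exactly what matches the $e^{2\alpha-1}/\alpha$ prefactor in the claim; variants of this construction would give different (and generally weaker) bounds.
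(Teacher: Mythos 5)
Your proof is correct and follows essentially the same route as the paper: the ball case is the same rescaling to the unit ball combined with the Adimurthi--Grossi/Ren--Wei limit $pE_p\to 4\pi e$, and your log-coordinate tent function is precisely the paper's test function $\omega_p$ (Grossi's function) tested against the radial Nehari/Rayleigh-quotient characterization of $u_{p,1,\alpha}$, which, as in the paper, rests on the uniqueness of the positive radial solution. The only loose end is the case $\alpha\geq1$, where you merely assert an ``at least as favorable'' bound; this is harmless, since the crude estimate $\int\phi^{p+1}e^{-2t}\,dt\geq c_\alpha e^{-\alpha p}$ (integrating only over $t\in[t_0-1,t_0]$) already yields $\limsup_p s(\phi)^2\leq e^{2\alpha}$ and hence the stated bound for every $\alpha>0$, which is what the paper's change-of-variables computation achieves uniformly in $\alpha$.
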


\begin{remark}The estimates given in Proposition \ref{lemmaStimeDatiIniziali} are accurate and give a sharp upper bound on the minimal energy of $G$-invariant sign-changing functions in $\mathcal N_p$, as needed to prove our results. Their proof is long and technically complicated and therefore we postpone it to the end of the section. However even a rough estimate of the minimal energy seems not easy to get.
\end{remark}

\begin{corollary}[Energy upper bound] \label{PropositionEnergyUpperBound}
$\,$\\
There exists $\bar\alpha >0$ such that for any $\epsilon >0$ there exists $p_{\epsilon}$ such that
$$pE_p(t_1u_{p,1,\bar\alpha}+t_2u_{p,2,\bar\alpha})\leq  4.97\ \cdot 4\pi e +\ \epsilon\qquad \mbox{ for all }t_1, t_2\in\mathbb R,\;\:\mbox{ for }p\geq p_{\epsilon}.$$
\end{corollary}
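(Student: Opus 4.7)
The plan is to derive this corollary as a direct combination of the superposition Lemma \ref{lemmaCombinazioneElementiNehari} and the energy estimates from Proposition \ref{lemmaStimeDatiIniziali}, reducing the problem to a one-variable numerical optimization in $\alpha$.

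First I would observe that since $\mathbb A_{p,\bar\alpha}\cap\mathbb B_{p,\bar\alpha}=\emptyset$ by construction (the outer radius $e^{-\bar\alpha p}$ of the ball equals the inner radius of the annulus), the functions $u_{p,1,\bar\alpha}$ (extended by zero outside $\mathbb A_{p,\bar\alpha}$) and $u_{p,2,\bar\alpha}$ (extended by zero outside $\mathbb B_{p,\bar\alpha}$) have disjoint supports. Both belong to $\mathcal N_p$ by Proposition \ref{lemmaStimeDatiIniziali}. Hence Lemma \ref{lemmaCombinazioneElementiNehari} applies and yields, for all $t_1,t_2\in\mathbb R$,
\begin{equation*}
E_p(t_1 u_{p,1,\bar\alpha}+t_2 u_{p,2,\bar\alpha})\ \leq\ E_p(u_{p,1,\bar\alpha})+E_p(u_{p,2,\bar\alpha}).
\end{equation*}
Multiplying by $p$ and applying Proposition \ref{lemmaStimeDatiIniziali} with $\epsilon/2$ in place of $\epsilon$ to each of the two terms, I obtain, for every $p\geq p_{\epsilon/2}$,
\begin{equation*}
pE_p(t_1 u_{p,1,\bar\alpha}+t_2 u_{p,2,\bar\alpha})\ \leq\ 4\pi e\cdot\frac{e^{2\bar\alpha-1}}{\bar\alpha}\,+\,4\pi e^{4\bar\alpha+1}\,+\,\epsilon\ =\ 4\pi e\,\bigl(g(\bar\alpha)\bigr)+\epsilon,
\end{equation*}
where I set $g(\alpha):=\dfrac{e^{2\alpha-1}}{\alpha}+e^{4\alpha}$.

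It therefore suffices to exhibit some $\bar\alpha>0$ with $g(\bar\alpha)\leq 4.97$. This is the only real task and it is a numerical one: differentiating, $g'(\alpha)=\frac{e^{2\alpha-1}(2\alpha-1)}{\alpha^2}+4e^{4\alpha}$, so the stationary condition is $1-2\alpha=4\alpha^2 e^{2\alpha+1}$, which has a solution near $\alpha\approx 0.195$. A direct check at, say, $\bar\alpha=1/5$ gives $g(1/5)=5e^{-3/5}+e^{4/5}\approx 2.744+2.226\approx 4.970$, and fine-tuning (e.g.\ $\bar\alpha=0.195$) yields $g(\bar\alpha)<4.97$ with a small positive slack $\delta$. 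Choosing such $\bar\alpha$, the conclusion follows.

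The main obstacle for this corollary is essentially already dispatched by Proposition \ref{lemmaStimeDatiIniziali}; here the only genuine point is verifying that the infimum of $g$ actually drops below the threshold $4.97$, and that the slack is strictly positive so it can absorb the small additive error coming from the Proposition. Since the slack is tiny (on the order of $10^{-3}$), I would write the argument so that the constant $4.97$ and the choice of $\bar\alpha$ are consistent: fix $\bar\alpha$ with $g(\bar\alpha)\leq 4.97-\delta_0$ for some explicit $\delta_0>0$, then the $\epsilon$ in Proposition \ref{lemmaStimeDatiIniziali} can be chosen freely without affecting the bound.
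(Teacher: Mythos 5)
Your proposal is correct and follows essentially the same route as the paper: combine Lemma \ref{lemmaCombinazioneElementiNehari} (disjoint supports, both functions in $\mathcal N_p$) with the two estimates of Proposition \ref{lemmaStimeDatiIniziali}, reduce to the scalar function $f(\alpha)=\frac{e^{2\alpha-1}}{\alpha}+e^{4\alpha}$, and check numerically that its value near $\alpha=1/5$ is at most $4.97$ (the paper takes $\bar\alpha$ to be the minimizer of $f$ and bounds $\min f\leq f(1/5)$). Your extra care about a strict slack $\delta_0$ is harmless but unnecessary, since the statement already allows the additive $\epsilon$ to absorb the error from the Proposition.
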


\begin{proof}
Let $\alpha>0$, since $u_{p,i,\alpha}\in\mathcal N_p$, $i=1,2$ and $supp \ u_{p,1,\alpha}\ \cap \ supp\ u_{p,2,\alpha}=\emptyset,$ by Lemma \ref{lemmaCombinazioneElementiNehari}
 $$pE_p(t_1u_{p,1,\alpha}+t_2u_{p,2,\alpha})\leq pE_p(u_{p,1,\alpha})+pE_p(u_{p,2,\alpha})\qquad \mbox{ for all }t_1, t_2\in\mathbb R.$$
We denote by $\bar\alpha$ the point of minimum in $(0,+\infty)$ of the scalar function $f(\alpha):=\frac{e^{2\alpha-1}}{\alpha}+e^{4\alpha }$.

Let $\epsilon>0,$ using Proposition \ref{lemmaStimeDatiIniziali} one has for $p\geq p_{\epsilon}$
$$ pE_p(u_{p,1,\bar\alpha})+pE_p(u_{p,2,\bar\alpha})=4\pi e  \min_{\alpha >0} f(\alpha)
+\epsilon\leq 4\pi e \,f\left(\tfrac{1}{5}\right) +\epsilon\leq 4.97\ \cdot 4\pi e + \epsilon.$$
\end{proof}

\

\begin{proposition}[Energy lower bound]\label{PropositionEnergyLowerBound}
Let $(u_p)_p$ be a family of nodal solutions of problem  \eqref{problem}. Then
for any $\epsilon>0$ there exists $p_{\epsilon}$ such that for $p\geq p_{\epsilon},$
$$pE_p(u_p\chi_{D_p})\geq  4\pi e -\epsilon$$
where $D_p\subset\Omega$ is any nodal domain of $u_p$ and $\chi_{D_p}$ is the characteristic function of $D_p$.
In particular if $u_p$ has  $k$ nodal domains then for any $\epsilon>0$
$$pE_p(u_p)\geq k 4\pi e -\epsilon,$$ for $p$ sufficiently large.
\end{proposition}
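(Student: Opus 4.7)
The plan is to work one nodal domain at a time and bound the energy there from below by the least Nehari energy. Let $D_p$ be any nodal domain of $u_p$; without loss of generality $u_p>0$ on $D_p$. Since $u_p$ vanishes on $\partial D_p$, the truncation $u_p\chi_{D_p}$ lies in $H^1_0(\Omega)$, and $u_p|_{D_p}$ solves $-\Delta u=u^p$ in $D_p$. Testing this equation with $u_p$ itself gives the Nehari identity
$$\int_{D_p}|\nabla u_p|^2\,dx \;=\; \int_{D_p} u_p^{p+1}\,dx,$$
i.e., $u_p\chi_{D_p}\in\mathcal N_p$. Writing $c_p:=\inf_{\mathcal N_p} E_p$ for the least Nehari energy level, this means in particular
$$E_p(u_p\chi_{D_p}) \;=\; \frac{p-1}{2(p+1)}\int_{D_p}|\nabla u_p|^2\,dx \;\ge\; c_p.$$

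The key analytic input I would then invoke is the sharp asymptotic for the least energy level of planar Lane-Emden problems, namely
$$\lim_{p\to\infty} p\, c_p \;=\; 4\pi e,$$
a classical fact going back to Ren-Wei and revisited by Adimurthi-Grossi and others, which is extracted from the sharp Moser-Trudinger inequality in $\R^2$. Granted this, for any $\epsilon>0$ there exists $p_\epsilon$ such that for $p\ge p_\epsilon$,
$$pE_p(u_p\chi_{D_p})\;\ge\; p\,c_p \;\ge\; 4\pi e -\epsilon,$$
which is the first inequality of the statement. For the second, I would sum the estimate over the $k$ pairwise disjoint nodal domains $D_p^{(1)},\dots,D_p^{(k)}$ of $u_p$ and use the additivity of $E_p$ on disjoint supports to obtain
$$p E_p(u_p)\;=\;\sum_{j=1}^{k} p E_p(u_p\chi_{D_p^{(j)}})\;\ge\; k(4\pi e-\epsilon),$$
then adjust $\epsilon$ to absorb the factor $k$.

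The only nontrivial ingredient is therefore the sharp asymptotic $p c_p\to 4\pi e$: obtaining the precise constant $4\pi e$, rather than a weaker bound of the form $c/p$ coming from the naive Sobolev embedding $H^1_0(\Omega)\hookrightarrow L^{p+1}(\Omega)$ with a polynomial-in-$p$ constant, is the delicate point of the whole argument. However, this is by now standard in the two-dimensional Lane-Emden literature and can be used as a black box, so the remainder of the proof is just Nehari bookkeeping together with the disjoint-supports decomposition.
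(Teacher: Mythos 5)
Your argument is correct and is essentially identical to the paper's proof: the restriction to a nodal domain lies on the Nehari manifold, so its energy is bounded below by $\inf_{\mathcal N_p} E_p$, and the conclusion follows from the asymptotic $\lim_{p\to\infty} p\inf_{\mathcal N_p} E_p = 4\pi e$ of Adimurthi--Grossi and Ren--Wei, plus summation over the disjoint nodal domains. The only difference is that you spell out the Nehari identity and the additivity step, which the paper leaves implicit.
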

\begin{proof}
Since $u_p\chi_{D_p}\in\mathcal N_p$ one has that $$pE_{p}(u_p\chi_{D_p}) \geq  \inf_{\mathcal N_p} \left(pE_p\right) .$$
The conclusion follows passing to the limit and observing that
\begin{equation}
\label{limite}
\lim_{p\rightarrow\infty}\left(\inf_{\mathcal N_p} pE_p\right) =4\pi e
\end{equation}
(see \cite[Lemma 2.1]{AdimurthiGrossi} and also \cite{RenWei}).
%
%
\end{proof}

\

\

\

\begin{proof}[Proof of Proposition \ref{lemmaStimeDatiIniziali}]\label{sectionAppendix}
It is enough to prove that
\begin{equation} \label{prop:stimap}
p\intopab |\na u_{p,1,\alpha}|^2\,dx \leq \fr{8\pi e^{2\al}}{\al}+o_p(1), \qquad\textrm{as $p\to+\infty$,}
\end{equation}
\begin{equation} \label{prop:stimaPalla}
p\int_{\mathbb B_{p, \alpha}} |\na u_{p,2,\alpha}|^2\,dx =  8\pi e \cdot e^{4\alpha} + o_p(1), \qquad\textrm{as $p\to+\infty$.}
\end{equation}
Indeed, since $u_{p,i,\alpha}\in\mathcal N_p,$ $i=1,2,$ one has that
\begin{eqnarray*}pE_p(u_{p,i,\alpha})=
p\left( \frac{1}{2}-\frac{1}{p+1}\right) \|\nabla u_{p,i,\alpha}\|^2_2 \leq  \frac{1}{2} p\|\nabla u_{p,i,\alpha}\|^2_2, \mbox{ for any }p>1.
\end{eqnarray*}

\

\

\emph{STEP 1: } we prove \eqref{prop:stimap}.

It is easy to see that $u_{p,1,\alpha}=\alpha_p^{\frac{1}{p-1}}z_p,$ where $z_p$ is a minimizer
for
$$
I_p:=\inf_{
\begin{array}{cr}u\in H^1_{0,r}( \Ompab)\\ u\neq 0\end{array}
}
\frac{\int_{ \Ompab}|\nabla u|^2}{
\left(\int_{ \Ompab}u^{p+1}\right)^{\frac{2}{p+1}}
}
$$
and $\alpha_p=\int_{ \Ompab}|\nabla z_p|^2/\int_{ \Ompab}z_p^{p+1}.$
Hence one has
\begin{eqnarray*}p\int_{ \Ompab}|\nabla u_{p,1,\alpha}|^2dx &=& p
\alpha_p^{\frac{2}{p-1}}\int_{ \Ompab}|\nabla z_p|^2dx=
p\left[
\frac{\int_{ \Ompab}|\nabla z_p|^2}{\left(\int_{ \Ompab}z_p^{p+1}\right)^{\frac{2}{p+1}}}\right]^{\frac{p+1}{p-1}}
\\
&
\leq &
p \left[
\frac{\int_{ \Ompab}|\nabla \omega_p|^2}{\left(\int_{ \Ompab}\omega_p^{p+1}\right)^{\frac{2}{p+1}}}\right]^{\frac{p+1}{p-1}},
\end{eqnarray*}
where $\omega_p$, introduced in \cite{Grossi}, is defined as follows:

$$\omega_p(x)=\omega_p(|x|)= \frac{2}{\al p+\log b }
\left\{
\begin{array}{ll}
\al p +\log |x|& e^{-\al p}\leq |x|\leq b^{\fr12}e^{-\fr12 \al p}\\
\log{\frac{b}{|x|} }& b^{\fr12}e^{-\fr12 \al p}\leq |x|\leq b
\end{array}\right.
$$

Since

$$\int_{\Ompab}|\nabla \omega_p|^2dx=\frac{8\pi}{(\al p+\log b)^2}\int_{e^{-\al p}}^b\frac{1}{r}dr=\frac{8\pi}{\al p+\log b},$$
we get

\bel\label{Mpab}
p\intopab |\na u_{p,1,\alpha}|^2\,dx \leq p\,\frac{(8\pi)^{\frac{p+1}{p-1}}}{(\al p+\log b)^{\fr{p+1}{p-1}}}\fr{1}{(\intopab \omega_p^{p+1})^{\fr 2{p-1}}}.
\eel
To estimate $p\intopab |\na u_{p,1,\alpha}|^2\,dx$ one needs to control
\begin{eqnarray}\label{help}
\int_{\Ompab}\omega_p^{p+1}&=&\frac{2^{p+2}\,\pi}{(\al p+\log b)^{p+1}}\left[{\int_{e^{-\al p}}^{b^{\fr12}e^{-\fr12\al p}}(\al p+\log r)^{p+1}\,r\,dr} + {\int_{b^{\fr12}e^{-\fr12\al p}}^b \log^{p+1}(\fr b r)\,r\,dr} \right]\nonumber\\
&\geq & \frac{2^{p+2}\,\pi}{(\al p+\log b)^{p+1}}{\int_{e^{-\al p}}^{b^{\fr12}e^{-\fr12\al p}}(\al p+\log r)^{p+1}\,r\,dr} \nonumber\\
&\stackrel{s=e^{\fr12\alpha p}r}{=}& \frac{2^{p+2}\,\pi\, e^{-\al p}}{(\al p+\log b)^{p+1}} \int_{e^{-\fr12\al p}}^{b^{\fr12}}\left(\fr{\al p}2+\log s\right)^{p+1}\,s\,ds \nonumber\\
&=& \frac{2^{p+2}\,\pi\, e^{-\al p}}{(\al p+\log b)^{p+1}}\: \left(\fr{\al p}{2}\right)^{p+1} \int_{e^{-\fr12\al p}}^{b^{\fr12}}\left(1+\fr{\fr2\al\log s}{p}\right)^{p+1}\,s\,ds \\
&=& \frac{\,2\,\pi\, e^{-\al p}\,({\al p})^{p+1}}{(\al p+\log b)^{p+1}}  \int_{0}^{b^{\fr12}}s^{\fr2\al+1}\,ds+o_p(1) \quad \mbox{as }p\to +\infty. \nonumber
\end{eqnarray}
Thus setting $c_{\al,b}=(\int_{0}^{b^{\fr12}}s^{\fr2\al+1}\,ds)$ and substituting \eqref{help} into \eqref{Mpab} one obtains the desired estimate
\begin{eqnarray*}
p\intopab |\na u_{p,1,\alpha}|^2\,dx &\leq& p\,\frac{(8\pi)^{\frac{p+1}{p-1}}}{(\al p+\log b)^{\fr{p+1}{p-1}}}\fr{(\al p+\log b)^{2\fr{p+1}{p-1}}}{\left(2\pi e^{-\al p}(\al p)^{p+1} c_{\al,b}\right)^\fr2{p-1}}\\
&=&\fr{8\pi\,e^{2\al}}{\al}+o_p(1),\qquad\textnormal{as $p\to+\infty$.}
\end{eqnarray*}

\

\emph{STEP 2: } proof of \eqref{prop:stimaPalla}.

Since $u_{p,2}$ is radial we define $u_{p,2}(r):=u_{p,2}(|x|),$ and  $u_{p,2}(r)=e^{\frac{2\alpha p}{p-1}}w_p(r e^{\alpha p})$ where
$w_p$ is the unique positive solution of $-\Delta u=u^p$ in the unit ball $B_{1}(0).$\\
It is easy to see that $w_{p}=\alpha_p^{\frac{1}{p-1}}z_p,$ where $z_p$ is a minimizer
for
$$
I_p:=\inf_{
\begin{array}{cr}u\in H^1_{0}( B_1(0))\\ u\neq 0\end{array}
}
\frac{\int_{ B_1(0)}|\nabla u|^2}{
\left(\int_{ B_1(0)}u^{p+1}\right)^{\frac{2}{p+1}}
}
$$
and $\alpha_p=\int_{B_1(0)}|\nabla z_p|^2/\int_{ B_1(0)}z_p^{p+1}.$
Hence from \cite[Lemma 2.1]{AdimurthiGrossi} (see also \cite{RenWei}) it follows that
\begin{eqnarray*}p\int_{ B_1(0)}|\nabla w_{p}|^2dx &=& p
\alpha_p^{\frac{2}{p-1}}\int_{ B_1(0)}|\nabla z_p|^2dx=
p\left[
\frac{\int_{ B_1(0)}|\nabla z_p|^2}{\left(\int_{ B_1(0)}z_p^{p+1}\right)^{\frac{2}{p+1}}}\right]^{\frac{p+1}{p-1}}
=p I_p^{\frac{p+1}{p-1}}\stackrel{p\to+\infty}{\To} 8\pi e,
\end{eqnarray*}
namely $$p\int_0^1 w_p'(r) ^2r dr\stackrel{p\to+\infty}{\To} 4 e.$$
As a consequence
\begin{eqnarray*}p\int_{\mathbb B_{p, \alpha}} |\na u_{p,2}|^2\,dx = 2\pi p\int_0^{e^{-\alpha p}}u_{p,2}'(r)^2rdr= 2\pi p e^{\frac{4\alpha p}{p-1}}\int_0^1 w_p'(r)^2rdr \stackrel{p\to+\infty}{\To} 8\pi  e^{4\alpha+1}. \end{eqnarray*}
\end{proof}

\

\section{A preliminary result on the parabolic problem}\label{sectionParabolic}

We start by recalling some well known facts about the parabolic problem \eqref{ParabolicProblem}. This problem  has been extensively studied in the last years by many authors, we refer to the  monograph \cite{QuittnerSoupletBook} for more results  and  further references.

\

Here $\Omega$ is any smooth bounded domain in $\mathbb R^2$ (but similar results hold also in $\mathbb R^n, \ n\geq 2$ with obvious changes).

\

Let $X:=W_0^{1,q}(\Omega)$ with $q>2$ and $Y:=\{u\in C^1(\overline{\Omega}): u|_{\partial\Omega}=0\},$ then
$Y\hookrightarrow X\hookrightarrow \{u\in C^0(\overline{\Omega}): u|_{\partial\Omega}=0\}$ (where the second inclusion is just the Sobolev embedding since $q>2$).

\

We first consider the local solvability of \eqref{ParabolicProblem} (see \cite[Appendix E]{QuittnerSoupletBook}):
\begin{proposition}\label{prop:localex}
For every $v_0\in X$ the IBVP \eqref{ParabolicProblem} has a unique
solution $v(t)=\varphi(t,v_0)\in C([0,T),X)$ with maximal existence time $T:=T(v_0)>0$ which is a classical solution for $t\in (0,T).$

The set $\mathcal G:=\{(t,v_0): t\in [0,T(v_0)),\  v_0\in X\}$ is open in $[0,\infty)\times X,$ and $\varphi:\mathcal G\rightarrow X$ is a semiflow on $X.$
\end{proposition}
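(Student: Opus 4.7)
The plan is to prove Proposition \ref{prop:localex} via the standard semigroup approach: rewrite \eqref{ParabolicProblem} in the mild (Duhamel) formulation
\[
v(t)=e^{t\Delta}v_{0}+\int_{0}^{t}e^{(t-s)\Delta}\,|v(s)|^{p-1}v(s)\,ds,
\]
where $\{e^{t\Delta}\}_{t\geq0}$ is the analytic semigroup on $L^{q}(\Omega)$ generated by the Dirichlet Laplacian, and then solve this by Banach's fixed point theorem in the space $C([0,T],X)$ for $T>0$ sufficiently small. The key input is that $q>2$ and $\dim\Omega=2$, so by Morrey's embedding $X=W^{1,q}_{0}(\Omega)\hookrightarrow C^{0}(\overline\Omega)$, which guarantees that any $v\in X$ is bounded and hence the superlinear Nemytskii operator $N(v):=|v|^{p-1}v$ is well-defined; moreover $N$ is locally Lipschitz from $X$ to $L^{\infty}(\Omega)$ (hence to $L^{q}(\Omega)$), since on bounded sets of $L^{\infty}$ one controls $|N(u)-N(w)|\leq C(\|u\|_{\infty}+\|w\|_{\infty})^{p-1}|u-w|$.

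The next step is to exploit the smoothing estimates of the heat semigroup between $L^{q}$-based spaces. Specifically, using the analyticity of $e^{t\Delta}$, one has
\[
\|e^{t\Delta}f\|_{X}\leq C\,t^{-1/2}\|f\|_{L^{q}(\Omega)},\qquad t\in(0,1],
\]
and $\int_{0}^{t}(t-s)^{-1/2}ds=2\sqrt{t}$, so the map
\[
\Phi(v)(t):=e^{t\Delta}v_{0}+\int_{0}^{t}e^{(t-s)\Delta}N(v(s))\,ds
\]
is a contraction on a suitable closed ball of $C([0,T],X)$ centered at $t\mapsto e^{t\Delta}v_{0}$, provided $T=T(\|v_{0}\|_{X})$ is small enough. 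This yields existence and uniqueness of a mild solution on $[0,T]$; continuous dependence on $v_{0}$ in $X$ follows from a Gronwall argument on the contraction estimate, which also gives that the set $\mathcal G$ is open in $[0,\infty)\times X$ and that $\varphi$ is continuous, i.e.\ a local semiflow.

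To extend the solution to a maximal interval $[0,T(v_0))$, I would apply the standard iterative prolongation: restart the fixed point argument at each time where $v(t)\in X$, and define $T(v_0)$ as the supremum of times of existence; the usual alternative $T(v_0)<\infty\Rightarrow\limsup_{t\uparrow T(v_0)}\|v(t)\|_{X}=+\infty$ follows automatically. Finally, to upgrade the mild solution to a classical solution on $(0,T(v_0))$, I would use parabolic bootstrap: once $v\in C([0,T],X)\subset C([0,T],C^{0})$, the composition $t\mapsto N(v(t))$ is in $C^{0}$ in space and Hölder in time away from $t=0$ (by the continuity of $v$ in $X$ and the Lipschitz estimate on $N$), and then Schauder-type parabolic regularity applied to the Duhamel formula yields $v\in C^{2,\alpha}_x\cap C^{1,\alpha/2}_t$ on $\Omega\times(0,T(v_0))$, so that $v$ solves \eqref{ParabolicProblem} pointwise.

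The main obstacle is the subtle interplay between the regularity of the nonlinearity and the base space: one needs $X$ to continuously embed into $L^{\infty}$ (to make $N$ well-defined and locally Lipschitz for arbitrary $p>1$), while simultaneously $X$ must be smoothed into by the semigroup with an \emph{integrable} time singularity. The hypothesis $q>2$ in dimension two is exactly the balance that achieves both; any careful verification of this proposition, as carried out in \cite[Appendix E]{QuittnerSoupletBook}, amounts to making this balance quantitative.
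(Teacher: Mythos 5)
Correct, and essentially the same argument as the source the paper relies on: the paper gives no proof of this proposition but simply cites \cite[Appendix E]{QuittnerSoupletBook}, whose treatment is exactly the standard mild-solution/contraction scheme in $X=W^{1,q}_0(\Omega)$, $q>2$, using the $L^\infty$-embedding to handle the nonlinearity, the $t^{-1/2}$ smoothing estimate of the Dirichlet heat semigroup, and a bootstrap to get classical regularity and the semiflow properties. Your outline reproduces that argument faithfully, so there is nothing to add.
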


\

Moreover the following continuity property with respect to the initial datum in stronger norm holds (see for instance  \cite[Appendix E]{QuittnerSoupletBook}).

\begin{proposition}\label{continuitaDatoIn}
For every $v_0\in X$ and every $t\in (0,T(v_0))$ there is a neighborhood $U\subset X$ of $v_0$ in $X$ such that $T(v)>t$ for $v\in U,$ and $\varphi (t,\cdot):(U,\|\cdot\|_X)\rightarrow (Y,\|\cdot\|_{Y})$ is a continuous map.
\end{proposition}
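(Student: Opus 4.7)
Since $\mathcal G$ is open in $[0,\infty)\times X$ by Proposition \ref{prop:localex}, for the given $v_0$ and $t<T(v_0)$ there exist $\delta>0$ and an $X$-neighborhood $U$ of $v_0$ such that $T(v)>t+\delta$ for every $v\in U$, the map $v\mapsto \varphi(\cdot,v)\in C([0,t+\delta],X)$ is continuous on $U$ (by the semiflow property furnished by Proposition \ref{prop:localex}), and $\sup_{v\in U,\,s\in[0,t+\delta]}\|\varphi(s,v)\|_X\leq M$ for a suitable $M>0$. Since $q>2$ gives $X\hookrightarrow C(\overline{\Omega})$, these functions are also uniformly bounded in $L^\infty(\Omega)$.

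The plan is to exploit the variation-of-constants (Duhamel) formula
$$
\varphi(t,v)=e^{t\Delta}v+\int_0^t e^{(t-s)\Delta}\bigl(|\varphi(s,v)|^{p-1}\varphi(s,v)\bigr)\,ds,
$$
where $\{e^{\tau\Delta}\}_{\tau\geq 0}$ denotes the Dirichlet heat semigroup on $\Omega$. The linear term $v\mapsto e^{t\Delta}v$ is continuous from $X$ into $Y$ for each fixed $t>0$, since $e^{t\Delta}$ maps $X$ (indeed even $L^2$) smoothly into $\{u\in C^1(\overline{\Omega}):u_{|\partial\Omega}=0\}=Y$, with operator norm depending only on $t$.

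For the nonlinear term, set $V_v(s):=|\varphi(s,v)|^{p-1}\varphi(s,v)$. Because $u\mapsto |u|^{p-1}u$ is locally Lipschitz on $L^\infty(\Omega)$, the uniform $L^\infty$-bound above together with the $X$-continuity of the semiflow on $U$ yields $V_{v_n}\to V_{v_0}$ in $C([0,t],L^\infty(\Omega))$ whenever $v_n\to v_0$ in $X$. Fixing some $r>2$ and invoking the standard two-dimensional parabolic gradient estimate
$$
\|\nabla e^{\tau\Delta}f\|_{L^\infty}\leq C\,\tau^{-\frac12-\frac1r}\|f\|_{L^r},\qquad \tau\in(0,t],
$$
one deduces
$$
\Bigl\|\int_0^t e^{(t-s)\Delta}\bigl(V_{v_n}(s)-V_{v_0}(s)\bigr)\,ds\Bigr\|_Y\leq C\int_0^t (t-s)^{-\frac12-\frac1r}\,\|V_{v_n}(s)-V_{v_0}(s)\|_{L^r}\,ds\longrightarrow 0,
$$
since the choice $r>2$ makes $\frac12+\frac1r<1$ and therefore the time singularity integrable. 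Combining this with the continuity of the linear part yields $\varphi(t,v_n)\to\varphi(t,v_0)$ in $Y$, proving the claim.

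The main obstacle is precisely the gain of regularity from the ambient space $X=W^{1,q}_0$ to the stronger space $Y=C^1$: plain $X$-continuity of the semiflow does not suffice, and one must use the parabolic smoothing of $e^{\tau\Delta}$ while carefully balancing the time singularity in the Duhamel integral against the available integrability of the nonlinear term. This balance is what forces $r>2$, and is made possible by the uniform $L^\infty$ bound coming from the Sobolev embedding $X\hookrightarrow C(\overline{\Omega})$ (which in turn uses $q>2$).
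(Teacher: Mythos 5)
The paper does not actually prove Proposition \ref{continuitaDatoIn}: it quotes it from \cite[Appendix E]{QuittnerSoupletBook}, where the argument is precisely the variation-of-constants plus parabolic-smoothing argument you give, so your route is the standard one and is essentially correct. Two small points to tighten: your displayed estimate bounds only the gradient part of the $\|\cdot\|_{Y}$-norm, so the zeroth-order part should be added via the trivial bound $\|e^{(t-s)\Delta}g\|_{L^\infty}\leq\|g\|_{L^\infty}$; and the uniform-in-time ingredients on $U$ (the bound $M$ and the convergence $V_{v_n}\to V_{v_0}$ in $C([0,t],L^\infty(\Omega))$) need the routine compactness argument that upgrades the joint continuity of the semiflow from Proposition \ref{prop:localex} to continuity of $v\mapsto\varphi(\cdot,v)$ into $C([0,t+\delta],X)$, and you also implicitly use that the $X$-solution is the mild solution satisfying the Duhamel formula, which is how it is constructed in the cited reference.
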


\

In the sequel we will often write $\varphi^t(v)$ instead of $\varphi(t,v).$\\
Observe that the nonlinearity in \eqref{ParabolicProblem} is odd, hence by uniqueness it follows that the map $X\ni v\mapsto \varphi^t(v)$ is odd.\\

For a classical solution $v$ of \eqref{ParabolicProblem}, it is easy to show that
$\frac{d}{dt}E_p(v)
=-\|v_t\|_2^2
$,
hence $E_p$ is strictly decreasing along nonconstant trajectories $t\mapsto \varphi(t,v_0)$ in $X$, namely the energy is a strict Lyapunov functional.

\

As a consequence, since  $0$ is a strict local minimum for $E_p$,  it follows that the constant solution $v\equiv 0$ is asymptotically stable in $X.$ Let  $\mathcal A_p^{\star}$ be its domain of attraction, i.e. :
$$ \mathcal A_p^{\star}:= \{v\in X:T(v)=+\infty \mbox{ and }\varphi(t,v)\rightarrow 0\mbox{ in }X\mbox{ as }t\rightarrow \infty\}.$$ The asymptotic stability of $0$, the semiflow properties of solutions of \eqref{ParabolicProblem} and  the continuous dependence of solutions on initial data imply that the set $\mathcal A_p^{\star}$ is an open neighborhood of $0$ in $X.$

\

Let $\partial \mathcal A_p^{\star}$ denote the  boundary of the set $\mathcal A_p^{\star}$ in $X$ with respect to the $X$-topology.

Since $\mathcal A_p^{\star}$ is open and $0$ is asymptotically stable, the continuous dependence of the semiflow $\varphi$ on the initial values implies that $\partial\mathcal A_p^{\star}$ is positively invariant under $\varphi.$  Moreover it is invariant with respect to the antipodal symmetry $x\mapsto -x$ (since $v\mapsto\varphi^t(v)$ is odd).

\

We also recall  the following global existence result  (see \cite[Appendix G]{QuittnerSoupletBook} for the definition and properties of the $\omega$-limit set and also \cite{GazzolaWeth} for a similar result)
\begin{proposition}Let $v_0\in\partial\mathcal A_p^{\star}.$ Then
\begin{itemize}

\item[i.] $T(v_0)=\infty$ and for every $\delta>0$ the set $\{\varphi^t(v_0)\ : \ t\geq\delta\}\ (\subset \partial\mathcal A_p^{\star})$ is relatively compact in $Y$

\item[ii.]
the $\omega$-limit set $$\omega(v_0):=\bigcap_{t>0}clos_{Y}\left(\{\varphi(s,v_0):s\geq t\}\right)\ \subset \partial\mathcal A_p^{\star}$$
is a nonempty compact subset of $Y$ consisting of solutions of \eqref{problem}.
\end{itemize}
\end{proposition}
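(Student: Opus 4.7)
The plan is to combine the abstract gradient-semiflow framework with the blow-up alternative for subcritical semilinear parabolic equations, following the framework of Appendix~G in the Quittner--Souplet monograph cited in the paper.

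For part (i), global existence, I would argue by contradiction. Suppose $T(v_0)<\infty$. A key fact for problems of type \eqref{ParabolicProblem} in dimension two (where every $p>1$ is Sobolev subcritical) is that the set of initial data leading to finite-time blow-up is \emph{open} in $X$: by Proposition~\ref{continuitaDatoIn}, pick $t^{*}<T(v_0)$ with $\|\varphi^{t^{*}}(v_0)\|_Y$ arbitrarily large; then any $v\in X$ sufficiently close to $v_0$ also has $\|\varphi^{t^{*}}(v)\|_Y$ very large, which forces subsequent finite-time blow-up via standard blow-up criteria. Hence $v_0$ would lie in the interior of the blow-up set, contradicting $v_0\in\overline{\mathcal{A}_p^{\star}}$, since every $X$-neighborhood of $v_0$ meets $\mathcal{A}_p^{\star}$ and points of $\mathcal{A}_p^{\star}$ are globally defined. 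For the relative compactness of $\{\varphi^t(v_0):t\geq\delta\}$ in $Y$, once global existence is in place, the Lyapunov inequality $E_p(\varphi^t(v_0))\leq E_p(v_0)$ combined with $\varphi^t(v_0)\in\partial\mathcal{A}_p^{\star}$ (so that the trajectory cannot decay to $0$) yields, via potential-well-type arguments, uniform $H^1_0$ bounds along the trajectory; parabolic smoothing (Schauder-type interior and boundary estimates) then upgrades this to uniform $C^1(\overline\Omega)$ bounds for $t\geq\delta$, and equicontinuity gives relative compactness in $Y$.

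For part (ii), the inclusion $\omega(v_0)\subset\partial\mathcal{A}_p^{\star}$ is immediate: $\partial\mathcal{A}_p^{\star}$ is closed in $X$ and positively invariant, so for each $t>0$ the set $\{\varphi^s(v_0):s\geq t\}$ and its $Y$-closure lie in $\partial\mathcal{A}_p^{\star}$, and $\omega(v_0)$ is their intersection over $t>0$. Nonemptiness and $Y$-compactness follow from the decreasing-intersection-of-nonempty-closed-subsets-of-a-compact-set structure ensured by part (i). That every $u\in\omega(v_0)$ solves \eqref{problem} is the standard LaSalle argument: $E_p$ is a strict Lyapunov functional, hence constant on the $\varphi$-invariant set $\omega(v_0)$, which forces $\varphi^t(u)\equiv u$ and therefore $u_t\equiv 0$, so $u$ is a stationary solution, and elliptic regularity promotes it to a classical solution of \eqref{problem}.

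The main obstacle is the rigorous justification of the openness of the blow-up set and the uniform a~priori bounds invoked in Step~(i); these are nontrivial but standard for 2D subcritical semilinear heat equations, and I would quote them from the Quittner--Souplet monograph rather than redo the technical parabolic estimates.
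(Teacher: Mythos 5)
The paper does not actually prove this proposition: it is recalled from \cite{QuittnerSoupletBook} (Appendix G) and \cite{GazzolaWeth}, so deferring the hard parabolic facts to those references is consistent with what the authors do. The problem is that the justification you sketch for part (i) contains a genuine error. Your openness-of-the-blow-up-set argument rests on the claim that a large $Y$-norm at some time $t^{*}$ ``forces subsequent finite-time blow-up via standard blow-up criteria''. No such norm-size criterion exists: data with arbitrarily large $C^1$-norm (for instance small-amplitude, rapidly oscillating functions) are smoothed by the flow and belong to $\mathcal A_p^{\star}$, so being large at time $t^{*}$ implies nothing about blow-up. The standard sufficient conditions for blow-up are energy-based (Levine's concavity argument: $E_p<0$ at some time implies $T<\infty$), not norm-based, so the contradiction you aim for does not follow as written. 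A correct route along these lines is: for $v_0\in\partial\mathcal A_p^{\star}$ one has $E_p(\varphi^t(v_0))\geq 0$ for all $t<T(v_0)$ (approximate $v_0$ by $v_n\in\mathcal A_p^{\star}$, along whose trajectories $E_p$ decreases to $E_p(0)=0$, and use continuous dependence), and then one invokes either the subcritical fact that finite-time blow-up forces the energy to tend to $-\infty$, or, as in \cite{GazzolaWeth} and \cite{QuittnerSoupletBook}, the uniform a priori bound for \emph{global} solutions of subcritical problems (Cazenave--Lions/Quittner type) applied to the approximating trajectories $\varphi^t(v_n)$, which yields both $T(v_0)=\infty$ and uniform bounds for the orbit.

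A related gap is your claim that $0\leq E_p(\varphi^t(v_0))\leq E_p(v_0)$ together with ``not decaying to $0$'' gives uniform $H^1_0$ bounds ``via potential-well-type arguments''. Boundedness of $E_p$ does not control $\|\nabla v\|_2$, since the two terms in $E_p$ may both blow up while their difference stays bounded; above the mountain-pass level there is no potential-well dichotomy to invoke. Here too the correct ingredient is the nonnegativity of the energy along the flow combined with the a priori bound for global solutions in the subcritical case (then parabolic smoothing upgrades the uniform $L^\infty$ bound to compactness in $Y$ for $t\geq\delta$, exactly as you say). Part (ii) — positive invariance and closedness of $\partial\mathcal A_p^{\star}$, the nested-compact-sets argument, and the LaSalle principle using that $E_p$ is a strict Lyapunov functional, plus elliptic regularity — is fine once (i) is secured.
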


\

Now, using a topological argument based on the Krasnoselskii genus we prove the existence of a nodal solution for the parabolic problem \eqref{ParabolicProblem} (see Theorem \ref{theoremSolution} below).
This preliminary result will be used in Section \ref{sectionProofs} to prove Theorem \ref{teoremaPrincipale}.

This approach is quite similar to the one first introduced in \cite{WeiWeth} (for systems of two coupled equations and then used in \cite{Ianni} for a nonlocal scalar equation) and allows to select a special initial value on $\partial\mathcal A^{\star}$ for which the corresponding solution has the desired properties. Observe that here, unlike \cite{WeiWeth, Ianni} we are not working in a radial setting. In particular we cannot use the "zero number property" (which is satisfied by radial solutions, see \cite[Appendix F]{QuittnerSoupletBook}) and which is at the core of the Wei-Weth topological approach, but we use only the maximum principle for parabolic equations. For this reason we cannot obtain at this stage any additional information on the number of nodal domains along the flow (unlike \cite{WeiWeth, Ianni} where the exact number of nodal domains is established).\\
Indeed to get the result about the number of nodal regions of the functions in the $\omega$-limit set we will use, in the next section, the action of the group $G$ and the energy estimates of the previous section.

\

For a closed  subset $B\subset\partial\mathcal A_p^{\star},$ invariant with respect to the antipodal symmetry,  we denote by $\gamma (B)$ the usual Krasnoselskii genus and we recall some of the properties we will need:
\begin{lemma}\label{lemmaGenus} Let $A, B\subset\partial\mathcal A_p^{\star}$ be closed and invariant with respect to the antipodal symmetry.
\begin{itemize}
\item[(i)] If $A\subset B,$ then $\gamma(A)\leq \gamma(B).$
\item[(ii)] If $h:A\rightarrow\partial\mathcal A_p^{\star} $ is continuous and odd, then $\gamma(A)\leq\gamma(\overline{h(A)}).$
\item[(iii)] If $S$ is an invariant with respect to the antipodal symmetry, bounded neighborhood of the origin in a $k$-dimensional normed vector space and $v:\partial S\rightarrow \partial \mathcal A_p^{\star}$ is continuous and odd,  then $\gamma(v(\partial S))\geq k. $
\end{itemize}
\end{lemma}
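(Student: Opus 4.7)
The plan is to verify properties (i)--(iii) from the standard definition of Krasnoselskii genus: for a closed symmetric subset $A$ of a Banach space not containing $0$, $\gamma(A)$ is the smallest integer $k\geq 1$ such that there exists a continuous odd map $A\to \mathbb{R}^k\setminus\{0\}$, with the conventions $\gamma(\emptyset)=0$ and $\gamma(A)=+\infty$ when no such $k$ exists. All three properties are classical, and they depend only on the existence of odd continuous maps out of $A$, not on the ambient embedding; in particular the fact that the sets live inside $\partial\mathcal{A}_p^{\star}\subset X$ is immaterial, so the proofs will follow the textbook arguments essentially verbatim.

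For (i), if $\gamma(B)=k<+\infty$, I would pick an odd continuous $\eta: B\to\mathbb{R}^k\setminus\{0\}$; its restriction $\eta|_A$ is odd, continuous, and $\mathbb{R}^k\setminus\{0\}$-valued, so $\gamma(A)\leq k$ (the cases $\gamma(B)=0$ or $+\infty$ being trivial). For (ii), setting $k:=\gamma(\overline{h(A)})$ (nothing to prove if $k=+\infty$), I would pick an odd continuous $\eta:\overline{h(A)}\to\mathbb{R}^k\setminus\{0\}$ and observe that $\eta\circ h: A\to\mathbb{R}^k\setminus\{0\}$ is odd and continuous, yielding $\gamma(A)\leq k$.

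For (iii), I would first apply (ii) with the map $h=v$ and $A=\partial S$: since $\partial S$ is compact in the finite-dimensional ambient space, $v(\partial S)$ is closed, so $\overline{v(\partial S)}=v(\partial S)$ and therefore $\gamma(\partial S)\leq \gamma(v(\partial S))$. It then remains to prove $\gamma(\partial S)\geq k$. The Minkowski functional of $S$ provides an odd homeomorphism between $\partial S$ and the unit sphere $S^{k-1}$ of the $k$-dimensional ambient space; if one had $\gamma(\partial S)\leq k-1$, composing with this homeomorphism would produce a continuous odd map $S^{k-1}\to\mathbb{R}^{k-1}\setminus\{0\}$, in contradiction with the Borsuk--Ulam theorem.

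The main (indeed the only) nontrivial step is the Borsuk--Ulam argument in (iii); parts (i) and (ii) reduce to a direct application of the definition. The proof proposed here is completely standard and makes no use of the specific structure of $\partial\mathcal{A}_p^{\star}$, which is appropriate since the lemma is purely an abstract topological recollection to be used later.
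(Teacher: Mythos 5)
Parts (i) and (ii) are fine, and it is worth noting that the paper does not prove this lemma at all: it is recalled as a list of standard properties of the Krasnoselskii genus, so your task was to supply the classical arguments, which you do correctly for the first two items. The weak point is in (iii), in the step ``the Minkowski functional of $S$ provides an odd homeomorphism between $\partial S$ and the unit sphere $S^{k-1}$''. This is only true when $S$ is star-shaped with respect to the origin (e.g.\ convex). For a general bounded symmetric neighborhood of $0$ the Minkowski functional need not be continuous, the set $\{x:\,p_S(x)=1\}$ need not coincide with $\partial S$, and $\partial S$ need not even be homeomorphic to a sphere (take $S$ the union of a ball centered at $0$ with a symmetric pair of small balls far from the origin: $\partial S$ is a disjoint union of three spheres). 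This generality is not a technicality here: the lemma is applied to $\partial\mathcal O_p(u_1,u_2)$, where $\mathcal O_p(u_1,u_2)=W_p(u_1,u_2)\cap\mathcal A_p^{\star}$ is a two-dimensional section of the domain of attraction of $0$, which is open, bounded and symmetric but not known to be star-shaped, so your argument as written does not cover the case actually used in the paper.

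The repair is standard but uses a genuinely stronger tool than the sphere version of Borsuk--Ulam: one needs the statement that an odd continuous map $\eta:\partial S\to\mathbb R^{k-1}$ must vanish somewhere, valid for the boundary of any bounded symmetric neighborhood $S$ of the origin in $\mathbb R^k$. If $\gamma(\partial S)\leq k-1$, pick an odd continuous $\eta:\partial S\to\mathbb R^{k-1}\setminus\{0\}$, extend it oddly and continuously to $\overline S$ by Tietze, and regard it as a map into $\mathbb R^{k-1}\times\{0\}\subset\mathbb R^k$; by Borsuk's theorem the Brouwer degree $\deg(\tilde\eta,S,0)$ is odd, hence nonzero, so $\tilde\eta(\overline S)$ covers a full neighborhood of $0$ in $\mathbb R^k$, contradicting the fact that its image lies in a hyperplane. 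This yields $\gamma(\partial S)\geq k$ (indeed $=k$) in the required generality, and the rest of your argument for (iii), namely $\gamma(\partial S)\leq\gamma(v(\partial S))$ by composition, goes through unchanged.
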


\

Let $u_1, u_2\in X,$ $supp\  u_1\cap supp\ u_2=\emptyset,$
and consider the $2$-dimensional subspace $W_p(u_1, u_2)\subset X$ spanned by the functions $u_{1}$ and $u_{2}.$ Let \begin{equation}\label{O_p}\mathcal O_p(u_1,u_2):=W_p(u_1,u_2)\cap \mathcal A_p^{\star}.\end{equation}
\begin{proposition}\label{bordoO}
 $\partial\mathcal O_p(u_1,u_2)\subset\partial\mathcal A_p^{\star}$ is (symmetric and) compact and $\gamma (\partial\mathcal O_p(u_1,u_2))\geq 2.$
\end{proposition}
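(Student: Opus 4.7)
The plan is to verify in turn (a) the inclusion $\partial\mathcal O_p(u_1,u_2)\subset\partial\mathcal A_p^\star$ together with the antipodal symmetry, (b) compactness of $\partial\mathcal O_p(u_1,u_2)$ via boundedness of $\mathcal O_p(u_1,u_2)$ inside $W_p(u_1,u_2)$, and (c) the genus lower bound, which will be immediate from Lemma \ref{lemmaGenus}(iii). For (a), note that $W_p(u_1,u_2)$ is finite-dimensional, hence closed in $X$, while $\mathcal A_p^\star$ is open in $X$; thus $\mathcal O_p(u_1,u_2)$ is open in $W_p(u_1,u_2)$ and its closure in $W_p$ coincides with its closure in $X$. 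If $v\in\partial\mathcal O_p(u_1,u_2)$ (relative boundary in $W_p$), then $v\in\overline{\mathcal O_p}\subset\overline{\mathcal A_p^\star}$ while $v\in W_p\setminus\mathcal O_p$, hence $v\notin\mathcal A_p^\star$; openness places $v$ in $\partial\mathcal A_p^\star$. Antipodal symmetry of $\mathcal O_p(u_1,u_2)$, and thus of its boundary, follows from $W_p(u_1,u_2)$ being a linear subspace together with the $\pm$-invariance of $\mathcal A_p^\star$ (the latter because $\varphi^t$ is odd).

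The essential point is the boundedness of $\mathcal O_p(u_1,u_2)$, and I regard this as the main obstacle. The key observation is that every element of $\mathcal A_p^\star$ has nonnegative energy: if $v_0\in\mathcal A_p^\star$ then $\varphi^t(v_0)\to 0$ in $X$, so by continuity of $E_p$ on $X$ and by the strict Lyapunov property,
\[
E_p(v_0)\;\geq\;\lim_{t\to\infty}E_p(\varphi^t(v_0))\;=\;E_p(0)\;=\;0.
\]
On the other hand, using the disjointness of supports exactly as in the proof of Lemma \ref{lemmaCombinazioneElementiNehari},
\[
E_p(t_1u_1+t_2u_2)\;=\;\sum_{i=1}^2\left(\frac{t_i^2}{2}\|\nabla u_i\|_2^2-\frac{|t_i|^{p+1}}{p+1}\|u_i\|_{p+1}^{p+1}\right),
\]
which, since $p+1>2$ and $u_1,u_2\neq 0$, tends to $-\infty$ as $t_1^2+t_2^2\to+\infty$. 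Hence there exists $R>0$ such that $E_p(t_1u_1+t_2u_2)<0$ whenever $t_1^2+t_2^2\geq R^2$, and no such vector can lie in $\mathcal A_p^\star$. Thus $\mathcal O_p(u_1,u_2)$ is contained in the $R$-ball of $W_p(u_1,u_2)$; being a subset of a compact set in a finite-dimensional space, its (relative) boundary $\partial\mathcal O_p(u_1,u_2)$ is closed and bounded, hence compact.

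Putting everything together, $\mathcal O_p(u_1,u_2)$ is a bounded, antipodally symmetric, open neighborhood of the origin in the two-dimensional normed vector space $W_p(u_1,u_2)$, and by (a) the inclusion $\iota\colon\partial\mathcal O_p(u_1,u_2)\hookrightarrow\partial\mathcal A_p^\star$ is a well-defined continuous odd map. Applying Lemma \ref{lemmaGenus}(iii) with $S=\mathcal O_p(u_1,u_2)$ and $v=\iota$ yields $\gamma(\partial\mathcal O_p(u_1,u_2))\geq 2$. The crux is the middle step: translating the superlinear blow-down of $E_p$ on the subspace $W_p(u_1,u_2)$ into nonmembership in $\mathcal A_p^\star$. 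Both the additive splitting of $E_p$ coming from disjoint supports and the Lyapunov character of $E_p$ along the semiflow are essential for this reduction.
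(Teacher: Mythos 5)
Your argument is correct and follows essentially the same route as the paper: show $E_p\geq 0$ on $\mathcal A_p^{\star}$ (via the Lyapunov property and $\varphi^t(v_0)\to 0$), show $E_p\to-\infty$ along $W_p(u_1,u_2)$ using the disjoint supports, conclude that $\mathcal O_p(u_1,u_2)$ is a bounded symmetric open neighborhood of $0$ in $W_p(u_1,u_2)$ with compact boundary contained in $\partial\mathcal A_p^{\star}$, and apply Lemma \ref{lemmaGenus}(iii). You merely spell out details the paper leaves implicit (the nonnegativity of $E_p$ on $\mathcal A_p^{\star}$ and the point-set topology behind $\partial\mathcal O_p(u_1,u_2)\subset\partial\mathcal A_p^{\star}$), which is fine.
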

\begin{proof}
Since $E_p(s u_{i})\rightarrow -\infty$ as $|s|\rightarrow +\infty,$ $i=1,2,$ it is easy to see that $$\lim_{{\scriptsize{\begin{array}{lr}\|w\|_X\rightarrow +\infty\\ w\in W_p(u_1,u_2)\end{array}}}}E_p(w)=-\infty.$$  We also know that
$$E_p(w)\geq 0, \qquad\forall w\in \mathcal A_p^{\star},$$
hence $\mathcal O_p(u_1,u_2)$ is a symmetric bounded open neighborhood of $0$ in $W_p(u_1,u_2).$
As a consequence  $\partial\mathcal O_p(u_1,u_2)\subset\partial\mathcal A_p^{\star}$ is compact and, by the property (iii) in Lemma \ref{lemmaGenus}, $\gamma (\partial\mathcal O_p(u_1,u_2))\geq 2.$
\end{proof}

\

We now define
\begin{equation}\label{A^1}\mathcal A_p^1:=\{u\in\partial\mathcal A_p^{\star}\ :\ u\geq 0 \mbox{ or } u\leq 0\}\end{equation}

\begin{lemma}\label{lemmaGenusA1}
$\mathcal A_p^1$ is a closed subset of $X$, is positively invariant  and
$\gamma (\mathcal A_p^1)\leq 1.$
\end{lemma}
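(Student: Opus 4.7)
The plan is to verify the three assertions separately, using only the semiflow properties from Propositions \ref{prop:localex}--\ref{continuitaDatoIn}, the closedness and positive invariance of $\partial\mathcal A_p^{\star}$ stated earlier, and the fact that $0\notin\partial\mathcal A_p^{\star}$ (since $0$ lies in the interior of $\mathcal A_p^{\star}$).

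For closedness, I would pick a sequence $(u_n)\subset\mathcal A_p^1$ with $u_n\to u$ in $X=W_0^{1,q}(\Omega)$, $q>2$. The continuous embedding $X\hookrightarrow C^0(\overline\Omega)$ yields uniform convergence, so the limit inherits a sign. More precisely, pass to a subsequence along which all $u_n$ sit in the same half (say $u_n\ge 0$); then the pointwise limit satisfies $u\ge 0$. Since $\partial\mathcal A_p^{\star}$ is closed in $X$, also $u\in\partial\mathcal A_p^{\star}$, hence $u\in\mathcal A_p^1$.

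For positive invariance, take $u_0\in\mathcal A_p^1$ and, without loss of generality, assume $u_0\ge 0$. The heat flow associated to \eqref{ParabolicProblem} preserves non-negativity: the constant function $0$ is a (sub)solution and coincides with the initial datum where $u_0$ vanishes, so the parabolic maximum principle (applied to $|v|^{p-1}v$, which is Lipschitz on bounded sets and vanishes at $0$) gives $\varphi^t(u_0)\ge 0$ for all $t\in[0,T(u_0))$. Combined with the already recalled positive invariance of $\partial\mathcal A_p^{\star}$ under $\varphi$, this yields $\varphi^t(u_0)\in\mathcal A_p^1$ for every admissible $t$; the case $u_0\le 0$ is identical (or follows from the oddness of $\varphi^t$).

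For the genus bound, I would exhibit a continuous odd map $h:\mathcal A_p^1\to\mathbb R\setminus\{0\}$, which by definition of the Krasnoselskii genus forces $\gamma(\mathcal A_p^1)\le 1$. The natural choice is
\be
h(u):=\int_\Omega u(x)\,dx,
\ee
which is continuous on $X$ (via $X\hookrightarrow L^1(\Omega)$) and odd. It only remains to check that $h$ does not vanish on $\mathcal A_p^1$: if $u\ge 0$ and $\int_\Omega u=0$ then $u\equiv 0$, contradicting $u\in\partial\mathcal A_p^{\star}$ (and analogously if $u\le 0$). The main, though still mild, subtlety is precisely this non-vanishing step, which crucially uses that $0$ is an interior point of the basin of attraction $\mathcal A_p^{\star}$ and therefore cannot lie on its boundary.
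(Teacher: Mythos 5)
Your proposal is correct and follows essentially the same route as the paper: closedness via the embedding $X\hookrightarrow C^0(\overline\Omega)$, positive invariance via the parabolic maximum principle together with the positive invariance of $\partial\mathcal A_p^{\star}$, and the genus bound via a continuous odd map into $\mathbb R\setminus\{0\}$ exploiting $0\notin\partial\mathcal A_p^{\star}$. The only (cosmetic) difference is your choice of the map $u\mapsto\int_\Omega u$, whereas the paper uses the locally constant map equal to $\pm1$ on the two relatively open pieces $B_\pm$ of $\mathcal A_p^1$; both work for the same reason.
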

\begin{proof}
The closure in $X$ is trivial: since $(u_n)_n\ \subset \mathcal A_p^1,$ $u_n\rightarrow_n v$ in $X$ implies, by Sobolev embedding, that $u_n\rightarrow v$ uniformly, hence $v$ doesn't change sign, moreover $\partial \mathcal A_p^{\star}$ is closed in $X$.

The positive invariance follows from the positive invariance of $\partial\mathcal A_p^{\star}$ and from the maximum principle for the parabolic equation (see for instance \cite[Appendix F]{QuittnerSoupletBook}).

Finally we show that $\gamma (\mathcal A_p^1)\leq 1.$
Since $0\not\in \partial\mathcal A_p^{\star}$ we have
$\mathcal A_p^1= B_+\cup B_-$ with disjoint subsets $B_{\pm}$ defined by
$$B_+:=\{u\in\partial\mathcal A_p^{\star}\: \ u\geq 0, u\not\equiv 0\}$$
$$B_-:=\{u\in\partial\mathcal A_p^{\star}\: \ u\leq 0, u\not\equiv 0\}.$$
$B_{\pm}$ are relatively open in $\mathcal A_p^1$ hence the map
$$h:\mathcal A_p^1 \rightarrow \mathbb R\setminus \{0\}, \qquad h(u):=\left\{
\begin{array}{rr}
1\qquad u\in B_+\\
-1 \qquad u\in B_-\end{array} \right.$$
is continuous, moreover it is also odd, and this concludes the proof.
\end{proof}
%
%
\

We define also the closed  subsets of $\partial \mathcal A_p^{\star}$
$$\mathcal C_p^{1,t}:=\{v\in\partial\mathcal A_p^{\star}:\varphi^t(v)\in \mathcal A_{p}^1\}\quad\mbox{ for }t>0.$$
\begin{proposition}\label{propositionC}
 $\mathcal A_{p}^1\subset \mathcal C_p^{1,t}$ and
$\gamma(\mathcal C_p^{1,t})=\gamma (\mathcal A_{p}^1)\leq 1$ for every $t>0.$
\end{proposition}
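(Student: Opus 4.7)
The plan is to establish the genus equality by sandwiching $\gamma(\mathcal C_p^{1,t})$ between $\gamma(\mathcal A_p^1)$ from below and above. First I would verify the set-theoretic inclusion $\mathcal A_p^1\subset\mathcal C_p^{1,t}$: by the positive invariance of $\mathcal A_p^1$ recorded in Lemma \ref{lemmaGenusA1}, any $v\in\mathcal A_p^1$ satisfies $\varphi^s(v)\in\mathcal A_p^1$ for every $s\geq 0$, so in particular $\varphi^t(v)\in\mathcal A_p^1$ and $v\in\mathcal C_p^{1,t}$ by definition. Property (i) of Lemma \ref{lemmaGenus} then yields $\gamma(\mathcal A_p^1)\leq\gamma(\mathcal C_p^{1,t})$.

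For the reverse inequality I would appeal to property (ii) of Lemma \ref{lemmaGenus} applied to the restriction of the semiflow $h:=\varphi^t|_{\mathcal C_p^{1,t}}:\mathcal C_p^{1,t}\to\partial\mathcal A_p^\star$. This map is well defined and takes values in $\partial\mathcal A_p^\star$ because $T(v)=+\infty$ on $\partial\mathcal A_p^\star$ and the boundary is positively invariant under $\varphi$; it is continuous in the $X$-topology by Proposition \ref{continuitaDatoIn}; and it is odd since the nonlinearity in \eqref{ParabolicProblem} is odd. By the very definition of $\mathcal C_p^{1,t}$ one has $h(\mathcal C_p^{1,t})\subset \mathcal A_p^1$, and since $\mathcal A_p^1$ is closed in $X$ (Lemma \ref{lemmaGenusA1}) we also get $\overline{h(\mathcal C_p^{1,t})}\subset\mathcal A_p^1$. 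Properties (ii) and (i) of Lemma \ref{lemmaGenus} thus give
$\gamma(\mathcal C_p^{1,t})\leq\gamma\bigl(\overline{h(\mathcal C_p^{1,t})}\bigr)\leq\gamma(\mathcal A_p^1).$

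Putting the two inequalities together yields $\gamma(\mathcal C_p^{1,t})=\gamma(\mathcal A_p^1)$, and the bound $\gamma(\mathcal A_p^1)\leq 1$ is already part of Lemma \ref{lemmaGenusA1}. I do not anticipate any genuine obstacle: the argument is essentially a bookkeeping exercise leveraging the positive invariance of $\mathcal A_p^1$, the oddness and continuity of $\varphi^t$, and the standard monotonicity and sub-invariance properties of the Krasnoselskii genus. The only minor point to verify is that $\mathcal C_p^{1,t}$ is itself closed and symmetric so that its genus is well defined, but both properties follow at once: closedness is the continuity of $\varphi^t$ applied to the closed set $\mathcal A_p^1$, and symmetry comes from the oddness of $\varphi^t$ together with the obvious symmetry of $\mathcal A_p^1$.
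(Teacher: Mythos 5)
Your argument is correct and follows essentially the same route as the paper: the inclusion comes from the positive invariance of $\mathcal A_p^1$, and the bound $\gamma(\mathcal C_p^{1,t})\leq\gamma(\mathcal A_p^1)\leq 1$ comes from applying properties (ii) and (i) of Lemma \ref{lemmaGenus} to the continuous odd map $\varphi^t:\mathcal C_p^{1,t}\to\partial\mathcal A_p^{\star}$ together with Lemma \ref{lemmaGenusA1}. Your added remarks (closedness and symmetry of $\mathcal C_p^{1,t}$, and that continuity of $\varphi^t$ in $X$ suffices) are consistent with the paper's framework and do not change the argument.
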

\begin{proof} The inclusion is a consequence of the positive invariance of $\mathcal A_p^{1}$. Property (ii) in Lemma \ref{lemmaGenus} implies that $\gamma (\mathcal C_p^{1,t})\leq 1.$
Indeed $\gamma (\mathcal C_p^{1,t})\leq \gamma (\overline{\varphi^t(\mathcal C_p^{1,t})})$ since the map $\varphi^t:\mathcal C_p^{1,t}\rightarrow \partial \mathcal A_p^{\star}$ is continuous and odd, and $\gamma (\overline{\varphi^t(\mathcal C_p^{1,t})})\leq 1$ because of the inclusion $\overline{\varphi^t(\mathcal C_p^{1,t})}\subset \mathcal A_{p}^1,$ using   (i) in Lemma  \ref{lemmaGenus} and Lemma \ref{lemmaGenusA1}.
\end{proof}

\

In order to prove the main result of this section we need to introduce the following set
 \bel\label{Y1}Y_1:=\{u\in Y\ : \ u\geq 0 \mbox{ or } u\leq 0\}\eel

\begin{lemma} \label{lemmaSoluzioniInterne}If $u\in \mathcal A_p^1$ is a solution of \eqref{problem}, then $u\in int_Y(Y_1)$.
\end{lemma}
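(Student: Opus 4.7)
The plan is to use the Hopf boundary lemma: a constant-sign solution of \eqref{problem} is strictly of one sign in $\Omega$ with non-vanishing boundary normal derivative, and these strict inequalities survive under small $C^1$-perturbations, which is exactly what $Y$-closeness provides.

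More concretely, I would start by noting that $u \in \mathcal A_p^1 \subset \partial\mathcal A_p^\star$, so $u \not\equiv 0$ (since $0$ lies in the open set $\mathcal A_p^\star$, not on its boundary). Without loss of generality assume $u \geq 0$. Standard elliptic regularity applied to $-\Delta u = |u|^{p-1}u$ gives $u \in C^2(\overline\Omega)$, and since $u$ is a non-trivial nonnegative solution, the strong maximum principle yields $u > 0$ in $\Omega$, while the Hopf lemma gives $\partial_\nu u(x) < 0$ for every $x \in \partial\Omega$, where $\nu$ denotes the outer unit normal. By continuity and compactness of $\partial\Omega$, there exists $\delta_0 > 0$ such that $\partial_\nu u \leq -\delta_0$ on $\partial\Omega$.

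Next I would choose a tubular neighborhood $U$ of $\partial\Omega$ inside $\overline\Omega$ of width $\eta > 0$ so small that for every $x \in U$, writing $x = y - s\nu(y)$ with $y \in \partial\Omega$ and $s \in [0,\eta]$, one has $\partial_{\nu(y)} u(x) \leq -\delta_0/2$; this is possible because the map $x \mapsto \nabla u(x)$ is continuous on $\overline\Omega$. On the complementary compact set $K := \overline\Omega \setminus U$ the function $u$ attains a positive minimum, say $u \geq m > 0$ on $K$. Now for $v \in Y$ with $\|v - u\|_Y < \varepsilon$ and $\varepsilon$ small enough (smaller than $m$ and than $\delta_0/4$), we get $v > 0$ on $K$ directly, and on $U$ we note that $v(y) = 0$ for $y \in \partial\Omega$ together with $\partial_{\nu(y)} v(x) \leq -\delta_0/4 < 0$ along the normal segment forces $v > 0$ along the open segment by one-dimensional integration. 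Hence $v \geq 0$ in $\Omega$, i.e. $v \in Y_1$, showing that the $Y$-ball of radius $\varepsilon$ around $u$ is contained in $Y_1$.

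The only mild obstacle is the boundary analysis: one must rule out a perturbation $v$ being negative in a thin collar along $\partial\Omega$, and the Hopf-lemma-plus-tubular-neighborhood argument above is precisely what handles it. This is why the conclusion uses the $Y = C^1_0(\overline\Omega)$ topology rather than the weaker $X = W_0^{1,q}$-topology used elsewhere; the $C^1$ control at the boundary is essential.
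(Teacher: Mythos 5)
Your proof is correct and rests on the same key ingredient as the paper's: the Hopf boundary lemma combined with the $C^1(\overline\Omega)$ control that the $Y$-topology provides near $\partial\Omega$. The only difference is presentational: you build the $Y$-neighborhood of $u$ inside $Y_1$ directly and quantitatively (uniform bound $\partial_\nu u\leq-\delta_0$, collar plus compact core, integration along normal segments), whereas the paper argues by contradiction with a sequence of sign-changing functions converging in $Y$ and uses the mean value theorem along the segment to the boundary projection to contradict the Hopf lemma at the limit point.
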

\begin{proof} Assume without loss of generality that  $u>0$ in $\Omega$.
If by contradiction $u\in\partial Y_1$, then there exists a sequence $(u_n)\subset Y\setminus Y_1=Y\setminus\bar Y_1$ ($Y_1$ is closed in $Y$) such that $u_n\stackrel{n\to+\infty}{\To} u$ in $Y$, namely $u_n\stackrel{n\to+\infty}{\To} u$ in $L^\infty(\Omega)$ and $\tfrac{\partial u_n}{\partial x_i}\stackrel{n\to+\infty}{\To} \tfrac{\partial u}{\partial x_i}$ in $L^\infty(\Omega)$, $i=1,2$. Since for any $n\in\N$, $u_n$ changes sign, then there exists $x_n\in\Omega$ such that $u_n(x_n)<0$. Up to a subsequence $x_n\stackrel{n\to+\infty}{\To} \bar x\in \bar\Omega$ and $u(\bar x)=0$, thus $\bar x\in \partial \Omega$.\\
Denoting by $y_n$ the projection of $x_n$ on $\partial\Omega,$
 by Lagrange Theorem we have $u_n(y_n)-u_n(x_n)=\langle\nabla u_n(\xi_n),y_n-x_n\rangle>0$, with $\xi_n=t_ny_n+(1-t_n)x_n$ for some $t_n\in(0,1).$ Thus $\xi_n\stackrel{n\to+\infty}{\To}\bar x$ and $\frac{\partial u}{\partial\nu}(\bar x)\geq 0$ where $\nu$ is the outer normal to $\partial\Omega$ in $\bar x$,  against the  Hopf Lemma.
\end{proof}

Similarly as in \cite{WeiWeth} and \cite{Ianni} we use the previous lemma together  with the continuity property w.r.t. initial data of the solutions of \eqref{ParabolicProblem} (Proposition \ref{continuitaDatoIn}) to prove the following
\begin{proposition}\label{propositionInterno}
Let $v_0\in\partial\mathcal A^{\star}_p$ such that $\omega(v_0)\cap \mathcal A_p^1\neq \emptyset$ and let $(v_n)\subset\partial\mathcal A^{\star}_p$ be a sequence such that $v_n\rightarrow_n v_0$ in $X$.  Then there exist $\bar t>0$ and $\bar n\in\mathbb N$ such that $$\varphi^t(v_n)\in \mathcal A_p^1\  \mbox{ for all }\  t\geq\bar t, n\geq\bar n.$$
\end{proposition}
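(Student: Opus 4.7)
My plan is to exploit the fact that $\omega(v_0)$ is a compact subset of $Y$ and that any fixed-sign solution of \eqref{problem} lies in the $Y$-interior of $Y_1$ (Lemma \ref{lemmaSoluzioniInterne}), so that closeness in the stronger $Y$-topology is preserved after flowing for a finite time.

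\textbf{Step 1: locate a time at which $\varphi^t(v_0)$ lies in the $Y$-interior of $Y_1$.} Pick $u \in \omega(v_0) \cap \mathcal A_p^1$; since $u$ is a solution of \eqref{problem} by the global existence proposition, Lemma \ref{lemmaSoluzioniInterne} gives $u \in int_Y(Y_1)$, i.e.\ there exists $\rho > 0$ such that the $Y$-ball $B_Y(u,\rho) \subset Y_1$. By definition of $\omega(v_0)$ there is a sequence $t_k \to +\infty$ with $\varphi^{t_k}(v_0) \to u$ in $Y$, so I can fix $\bar t := t_{k_0}$ large enough that
\be
\varphi^{\bar t}(v_0) \in B_Y\!\left(u,\tfrac{\rho}{2}\right)\subset int_Y(Y_1).
\ee
Because $v_0\in\partial\mathcal A_p^\star$ and $\partial\mathcal A_p^\star$ is positively invariant, $\varphi^{\bar t}(v_0)\in\partial\mathcal A_p^\star$, and therefore $\varphi^{\bar t}(v_0)\in\mathcal A_p^1$, in fact interior to $Y_1$.

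\textbf{Step 2: transfer this property to the sequence $v_n$ for large $n$.} By Proposition \ref{continuitaDatoIn} applied at the point $v_0$ with time $\bar t < T(v_0) = +\infty$, there is an $X$-neighborhood $U$ of $v_0$ such that $T(v)>\bar t$ for $v\in U$ and $\varphi^{\bar t}:(U,\|\cdot\|_X)\to(Y,\|\cdot\|_Y)$ is continuous. Since $v_n\to v_0$ in $X$, for $n\geq \bar n$ large one has $v_n\in U$ and
\be
\|\varphi^{\bar t}(v_n)-\varphi^{\bar t}(v_0)\|_Y < \tfrac{\rho}{2},
\ee
so $\varphi^{\bar t}(v_n)\in B_Y(u,\rho)\subset Y_1$. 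Combined with positive invariance of $\partial\mathcal A_p^\star$ (so that $\varphi^{\bar t}(v_n)\in\partial\mathcal A_p^\star$ for each $n$), this yields $\varphi^{\bar t}(v_n)\in \mathcal A_p^1$ for every $n\geq\bar n$.

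\textbf{Step 3: propagate to all subsequent times via positive invariance of $\mathcal A_p^1$.} For $t\geq \bar t$ I write $\varphi^t(v_n)=\varphi^{t-\bar t}(\varphi^{\bar t}(v_n))$. Since $\varphi^{\bar t}(v_n)\in\mathcal A_p^1$ and $\mathcal A_p^1$ is positively invariant by Lemma \ref{lemmaGenusA1}, it follows that $\varphi^t(v_n)\in\mathcal A_p^1$ for all $t\geq \bar t$ and all $n\geq\bar n$, which is the desired conclusion.

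The only delicate point is Step 1, which genuinely relies on the $Y$-interior statement of Lemma \ref{lemmaSoluzioniInterne}: mere convergence in $X$ or $C^0$ would not be enough to conclude that $\varphi^{\bar t}(v_0)$ is well inside $Y_1$, and hence to absorb the small perturbation produced by replacing $v_0$ with $v_n$. The rest is the standard combination of continuity of the semiflow with respect to initial data (in the stronger $Y$-norm at positive times, Proposition \ref{continuitaDatoIn}) and the positive invariance of $\mathcal A_p^1$ and $\partial\mathcal A_p^\star$.
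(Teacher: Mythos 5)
Your proof is correct and follows essentially the same route as the paper's: locate $\bar t$ with $\varphi^{\bar t}(v_0)\in int_Y(Y_1)$ via Lemma \ref{lemmaSoluzioniInterne} and the definition of the $\omega$-limit set, transfer to $v_n$ by the $X$-to-$Y$ continuity of Proposition \ref{continuitaDatoIn}, and conclude by positive invariance of $\partial\mathcal A_p^{\star}$ and $\mathcal A_p^1$. The explicit $\rho/2$-ball bookkeeping is just a more quantitative phrasing of the same argument.
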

\begin{proof}
Since
$\omega( v_{0})$ consists of solutions of \eqref{problem},  Lemma \ref{lemmaSoluzioniInterne}  implies that $\omega(v_{0})\cap int_{Y}(Y_{1})\neq\emptyset$, where $Y_1$ is the set defined in \eqref{Y1}. By the definition of the $\omega$-limit set it follows that there exists $\bar t>0$ such that $\varphi^{\bar t}(v_0)\in int_{Y}(Y_{1}) $. By Proposition \ref{continuitaDatoIn} there exists $\bar n\in\mathbb N$ such that
$\varphi^{\bar t}(v_n)\in int_{Y}(Y_{1}) $ for every $n\geq\bar n$. Since $\partial\mathcal A^{\star}_p$ is positively invariant, in particular $\varphi^{\bar t}(v_n)\in \mathcal A_p^1 $ for every $n\geq\bar n$. The conclusion follows from the positive invariance of  $\mathcal A_p^1$ (Lemma \ref{lemmaGenusA1}).
\end{proof}
\begin{remark}
We underline that the result in Lemma \ref{lemmaSoluzioniInterne} in general is not true if one substitutes the $Y$-topology with the $X$-topology. Fortunately the continuity property in Proposition \ref{continuitaDatoIn} holds in the $Y$-norm.
\end{remark}

\

 Let $\partial \mathcal O_p(u_1,u_2)$ and $\mathcal A_{p}^1$ the ones  defined respectively in \eqref{O_p} and \eqref{A^1}, we can now prove the main result of this section:

\begin{theorem}\label{theoremSolution} $\forall p>1$
there exists an initial condition $ v_{p,0}\in \partial \mathcal O_p(u_1,u_2)\setminus \mathcal A_{p}^1$ such that the (global) solution of \eqref{ParabolicProblem} $\varphi_p^t(v_{p,0})\in \partial\mathcal A_p^{\star}\setminus \mathcal A_p^1$, $\forall  t\in (0,+\infty)$.  Moreover $\emptyset\neq\omega(v_{p,0})\subset\partial\mathcal A_p^{\star}\setminus \mathcal A_p^1.$
\end{theorem}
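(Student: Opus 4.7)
The plan is to select $v_{p,0}$ on the symmetric compact set $\partial\mathcal{O}_p(u_1,u_2)\subset\partial\mathcal{A}_p^\star$ by a genus/compactness argument, ruling out those initial data whose trajectory eventually enters $\mathcal{A}_p^1$. The main tools are the genus bound $\gamma(\partial\mathcal{O}_p(u_1,u_2))\geq 2$ (Proposition \ref{bordoO}), the genus bound $\gamma(\mathcal{C}_p^{1,t})\leq 1$ (Proposition \ref{propositionC}), and the local-stability statement in Proposition \ref{propositionInterno}.

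First I would note that the family $\{\mathcal{C}_p^{1,t}\}_{t>0}$ is monotonically nondecreasing in $t$: if $v\in\mathcal{C}_p^{1,t_0}$ then $\varphi^{t_0}(v)\in\mathcal{A}_p^1$, and the positive invariance of $\mathcal{A}_p^1$ (Lemma \ref{lemmaGenusA1}) yields $\varphi^t(v)\in\mathcal{A}_p^1$ for every $t\geq t_0$, so $\mathcal{C}_p^{1,t_0}\subset\mathcal{C}_p^{1,t}$. Next I would argue by contradiction: suppose every $v\in\partial\mathcal{O}_p(u_1,u_2)$ satisfies $\varphi^{t_v}(v)\in\mathcal{A}_p^1$ for some $t_v>0$. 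By positive invariance of $\mathcal{A}_p^1$ this forces $\omega(v)\cap\mathcal{A}_p^1\neq\emptyset$, and Proposition \ref{propositionInterno} then produces, around each such $v$, an $X$-neighborhood $U_v\subset\partial\mathcal{A}_p^\star$ and a time $\bar t_v>0$ such that $U_v\subset\mathcal{C}_p^{1,\bar t_v}$. The $X$-compactness of $\partial\mathcal{O}_p(u_1,u_2)$ (Proposition \ref{bordoO}) yields a finite subcover $U_{v_1},\ldots,U_{v_N}$, and the monotonicity above delivers a single $T:=\max_i\bar t_{v_i}$ with $\partial\mathcal{O}_p(u_1,u_2)\subset\mathcal{C}_p^{1,T}$. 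By Lemma \ref{lemmaGenus}(i) together with Proposition \ref{propositionC} this gives $\gamma(\partial\mathcal{O}_p(u_1,u_2))\leq\gamma(\mathcal{C}_p^{1,T})\leq 1$, contradicting $\gamma(\partial\mathcal{O}_p(u_1,u_2))\geq 2$.

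Then any $v_{p,0}\in\partial\mathcal{O}_p(u_1,u_2)\setminus\bigcup_{t>0}\mathcal{C}_p^{1,t}$ is the desired initial datum: the positive invariance of $\partial\mathcal{A}_p^\star$ guarantees $\varphi^t(v_{p,0})\in\partial\mathcal{A}_p^\star$ for every $t\geq 0$, and by construction $\varphi^t(v_{p,0})\notin\mathcal{A}_p^1$ for every $t>0$; applying the positive invariance of $\mathcal{A}_p^1$ to $v_{p,0}$ itself also rules out $v_{p,0}\in\mathcal{A}_p^1$. Finally, $\omega(v_{p,0})$ is nonempty and compact by the global existence proposition recalled earlier, and applying Proposition \ref{propositionInterno} to the constant sequence $v_n\equiv v_{p,0}$ shows $\omega(v_{p,0})\cap\mathcal{A}_p^1=\emptyset$: otherwise there would be $\bar t>0$ with $\varphi^t(v_{p,0})\in\mathcal{A}_p^1$ for $t\geq\bar t$, contrary to the property just established.

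The main obstacle is the compactness/monotonicity upgrade in the second step: Proposition \ref{propositionInterno} provides only a local and $v$-dependent entry time into $\mathcal{A}_p^1$, and one must promote it to a \emph{uniform} entry time $T$ across $\partial\mathcal{O}_p(u_1,u_2)$ before the genus comparison becomes applicable. The choice to work with the $X$-topology is essential here, both for the continuity in Proposition \ref{continuitaDatoIn} and for the compactness of the two-dimensional set $\partial\mathcal{O}_p(u_1,u_2)$.
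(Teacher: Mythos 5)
Your argument is correct and reaches the same conclusion from the same ingredients --- the genus bounds of Propositions \ref{bordoO} and \ref{propositionC}, the positive invariance of $\mathcal A_p^1$ and of $\partial\mathcal A_p^{\star}$, and Proposition \ref{propositionInterno} --- but the selection mechanism is organized differently from the paper's. The paper does not run a global contradiction: for each $t_n\to\infty$ it picks $v_n\in\partial\mathcal O_p(u_1,u_2)\setminus\mathcal C_p^{1,t_n}$ (nonempty since $\gamma(\partial\mathcal O_p(u_1,u_2))\geq 2>1\geq\gamma(\mathcal C_p^{1,t_n})$), extracts a limit point $v_{p,0}$ by compactness of $\partial\mathcal O_p(u_1,u_2)$, and applies Proposition \ref{propositionInterno} to that specific sequence: if $\omega(v_{p,0})$ met $\mathcal A_p^1$, then $\varphi^{t_n}(v_n)\in\mathcal A_p^1$ for $n$ large, contradicting $v_n\notin\mathcal C_p^{1,t_n}$; the trajectory property then follows from positive invariance, exactly as in your last step. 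You instead suppose every point of $\partial\mathcal O_p(u_1,u_2)$ eventually enters $\mathcal A_p^1$, upgrade Proposition \ref{propositionInterno} to a neighborhood statement, and combine compactness with the monotonicity of $t\mapsto\mathcal C_p^{1,t}$ to get a uniform time $T$ with $\partial\mathcal O_p(u_1,u_2)\subset\mathcal C_p^{1,T}$, which contradicts the genus bounds. This yields a slightly stronger intermediate fact (a uniform entry time under the contradiction hypothesis), at the cost of two small points you should make explicit: (i) the neighborhood form of Proposition \ref{propositionInterno} is not literally its statement, though it follows either from its proof (the time $\bar t$ there depends only on $v_0$, and Proposition \ref{continuitaDatoIn} sends an $X$-neighborhood into $int_{Y}(Y_{1})$ at time $\bar t$) or from the sequential statement by a routine subsequence argument, and the $U_v$ are neighborhoods relative to $\partial\mathcal A_p^{\star}$; (ii) deducing $\omega(v)\cap\mathcal A_p^1\neq\emptyset$ from a single entry time uses that $\omega(v)\neq\emptyset$ (true since $v\in\partial\mathcal A_p^{\star}$) and that $\mathcal A_p^1$ is closed in $X$ while $Y$-convergence implies $X$-convergence. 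The paper's route avoids both conversions by working with the sequence $(v_n)$ directly; otherwise the two proofs are of comparable length and rest on identical lemmas.
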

\begin{proof}  The proof consists in constructing a suitable initial condition $v_{p,0}$ in $\partial \mathcal O_p(u_1,u_2)\setminus \mathcal A_{p}^1$ as the limit of a sequence of initial conditions $(v_n)$ suitably chosen (using a genus argument).
\\
More precisely, since by Propositions \ref{bordoO} $\gamma (\partial\mathcal O_p(u_1,u_2))\geq 2$ and by Proposition \ref{propositionC} $\gamma(\mathcal C_p^{1,t})\leq1$ for every $t>0$, we deduce that $\emptyset\neq\partial \mathcal O_p(u_1,u_2)\setminus \mathcal C_p^{1,t}$ for every $t>0$.
In particular for any sequence $t_n\rightarrow +\infty$  there exists $v_n\in \partial \mathcal O_p(u_1,u_2)\setminus \mathcal C_p^{1,t_n} (\subset \partial \mathcal A^{\star}_p\setminus\mathcal A^1_p)$ and,
since $\partial\mathcal O_p(u_1,u_2)$ is compact, we may pass to a subsequence such that $v_n\rightarrow  v_{p,0}\in\partial\mathcal O_p(u_1,u_2)\subset \partial\mathcal A_p^{\star}$ as $n\rightarrow \infty.$
Obviously $\omega(v_{p,0})\subset \partial\mathcal A_p^{\star}.$ Similarly as in \cite{WeiWeth, Ianni} we now prove that $\omega( v_{p,0})\subset\partial \mathcal A_p^{\star}\setminus\mathcal A_{p}^1.$
Assume  by contradiction that  $\omega( v_{p,0})\cap \mathcal A_{p}^1\neq \emptyset$, than
by Proposition \ref{propositionInterno} there exist $\bar t>0$ and $\bar n\in\mathbb N$ such that
$\varphi^t(v_n)\in \mathcal A_p^1$ for every $t\geq\bar t, n\geq\bar n$, hence $\varphi^{t_n}(v_n)\in \mathcal A_p^1$ for $n$ sufficiently large, reaching a contradiction. Indeed by construction $v_n\not\in C_p^{1,t_n}, $ namely
$\varphi^{t_n}( v_n)\not \in \mathcal A_p^1$.
%
%
%
%
%
%
%
%
%

Last by the positive invariance of $\mathcal A_p^1$ we also have that $\varphi_p^t(v_{p,0})\in \partial\mathcal A_p^{\star}\setminus \mathcal A_p^1$, $\forall t\in (0,+\infty)$.
\end{proof}

\section{Proof of Theorem \ref{teoremaPrincipale}}\label{sectionProofs}

The proof of Theorem \ref{teoremaPrincipale}  relies on three main ingredients: the \emph{preliminary results} in Section \ref{sectionParabolic} (in particular Theorem \ref{theoremSolution}) (to obtain nodal solutions), the \emph{energy estimates} in Section \ref{SectionEnergyEstimates} (to avoid more than two nodal domains in the $\omega$-limit)
and a geometrical argument in the presence of symmetry (to avoid that in the $\omega$-limit the nodal line could touch the boundary or contain the origin).
\edz{\rosso{*}}

\

Before getting started with the proof, we show general lemmas relating the $G$-invariance of $\Omega$ with the properties of the nodal line and the nodal domains of solutions of problem \eqref{problem} in $\Omega$. In order to clarify and state the results in a rigorous way, let us introduce some notations.
\edz{\rosso{*}}

\

Let $G$ be a finite subgroup of the orthogonal group $O(2)$ on $\R^2$, such that $|Gx|\geq\ns$ for any $x\in\bar \Omega\setminus\{O\}$. By Remark \ref{rmkGruppo} $G$ contains the cyclic group of rotations $C_h$ (for some $h\geq\ns$); in the following we will denote by $g$ the rotation of $2\pi/h$, which is a generator of $C_h$, hence $$C_h=\{g^0, g^1,\ldots,g^{h-1}\}\quad\textrm{where $g^0=\Id$, $g^k=\underbrace{g\circ\ldots\circ g}_{k}$ for $k=1,\ldots,h-1$.}$$
A function $u\in H^1_0(\Omega)$ is said to be $G$-symmetric if
$$
\textnormal{$u(\gamma x)=u(x)$ for any $\gamma\in G$ and a.e. $x$ in $\Omega$}.
$$
Moreover given a solution $u_{p}$ of \eqref{problem} we recall that
we denote by $\mathcal{Z}_{p}$ the nodal set of $u_{p}$, namely $$\mathcal{Z}_p=\set{x\in\Omega\,|\,u_p(x)=0}.$$

Finally in the sequel we may call in short curve the image of a curve.

\ble\label{lemmaA}
Let $u_p$ be a $G$-symmetric sign-changing solution to \eqref{problem} with at most four nodal regions. Then its nodal line does not passes through the origin $O$.
\ele

\begin{proof}
We recall first that if a point $x_0\in\Omega$ belongs to the nodal set, then there exists a positive radius $R$ such that $\{u_p^{-1}(0)\}\cap B(x_0,R)$ is made of $2n$ $C^1$-simple arcs, for some integer $n$, which all end in $x_0$ and whose tangent
lines at $x_0$ divide the disc into $2n$ angles of equal amplitude (see \cite{HW} or Theorem 2.1 of \cite{HHT}).

\

Assume by contradiction that $O \in \mathcal{Z}_p$.  By the properties of the nodal set just recalled we have that there exists a ball $B_r:=\{x\in\R^2,\,|x|<r\}$ such that there exists $x\in\mathcal{Z}_p\cap\partial B_r$ and a regular curve $\gamma_x\subset \mathcal{Z}_p\cap \overline{B_r}$ joining $O$ to $x$ having the following properties:
\begin{itemize}
\item[1.] $\gamma_x\setminus\{x\}\subset B_r, $
\item[2.] $\gamma_x\cap g^i(\gamma_x)=\{O\}$, for any $i=1, \ldots ,h-1$
\item[3.] $\gamma_x\subset\partial D_0^+$, where $D_0^+$ is a nodal domain  where $u_p>0$ (and so, by the strong maximum principle, also $\gamma_x\subset\partial D_0^-$, where $D_0^-$ is a nodal domain  where $u_p<0$)
\end{itemize}
Because of the symmetry, for any $i\in\{1,\ldots,h-1\}$ the curve $ \gamma_{g^ix}:=g^i(\gamma_x)\subset \mathcal{Z}_p\cap \overline{B_r}$ joins $O$ to $g^ix$ and satisfies the analogous properties:
\begin{itemize}
\item[1.] $\gamma_{g^ix}\setminus\{g^ix\}\subset B_r, $
\item[2.] $\gamma_{g^ix}\cap \gamma_{g^jx}=\{O\}$, for any  $i\neq j$
\item[3.] $\gamma_{g^ix}\subset\partial D_i^+$, where $D_i^+:= g^i(D_0^+)$ and $\gamma_{g^ix}\subset\partial (D_i^-)$ where $D_i^-:= g^i(D_0^-)$.
\end{itemize}
Clearly by construction the sets $(D^{\pm}_i\cap B_r)$ are pairwise disjoint.

\

Moreover, let us fix two points $d_0^\pm$ in $D^{\pm}_0\cap \partial B_r$; by symmetry for any $i\in\{1,\ldots,h-1\}$ $d_i^\pm:=g^i(d_0^\pm)\in D^{\pm}_i\cap \partial B_r$.
Since $D^\pm_0$ are connected, there exist two piecewise regular curves $\gamma_0^\pm:[0,1]\to (D^\pm_0\cup \{0\})$ joining the points $O$ and $d_0^\pm$, more precisely such that $\gamma_0^\pm(0)=O$ and $\gamma_0^\pm(t)\in D^\pm_0$ for any $t\in(0,1]$. For any $i\in\{1,\ldots,h-1\}$, we set $\gamma_i^{\pm}(t):=g^i(\gamma^\pm_0(t))$.

\

If the nodal domains  $D^+_k$, $k=0, \ldots, h-1$ were pairwise disjoint, then $u_p$ would have at least $h+1\geq 5$ nodal domains (at least $h$ where it is positive and at least $1$ where it is negative), which leads to a contradiction.\\

Hence we can assume that $D^+_k=D^+_j$ for some $k\neq j$. \\

First we show that in this case $D^+_0=D^+_1=\ldots = D^+_{h-1}.$

This is a direct consequence of the symmetry in the case  $j=k+1$ (or $j=0$ when $k=h-1$), indeed for every $i=0,\ldots ,h-1$ one has
$$D^+_i=g^{i-k}(D^+_k)=g^{i-k}(D^+_j)=g^{i-k+j}(D^+_0)=D^+_{i+1}.$$
When $j\neq k+1$ (or $j\neq 0$ in the case $k=h-1$) we need to exploit more the properties of $\mathbb R^2$. Let us assume without loss of generality that $k=0<j$.
Being $D^+_0=D^+_j$ connected, there exists a piecewise regular curve $\varphi^+$ connecting $d^+_0$ and $d^+_j$ in $D^+_0$.\\
By definition of $\gamma^+_0$ and since $O\notin \varphi^+$, then there exist $\rho\in(0,r)$ and $t_\rho\in(0,1)$ such that
$$\textrm{$\gamma^+_0(t)\subset B_\rho$ for $t<t_\rho$,\qquad while \ $\overline{B_\rho}\cap(\{\gamma^+_0(t) , t>t_\rho\}\cup\varphi^+)=\emptyset$.}$$

\begin{figure}[htb]
  \centering
  \def\svgwidth{250pt}
  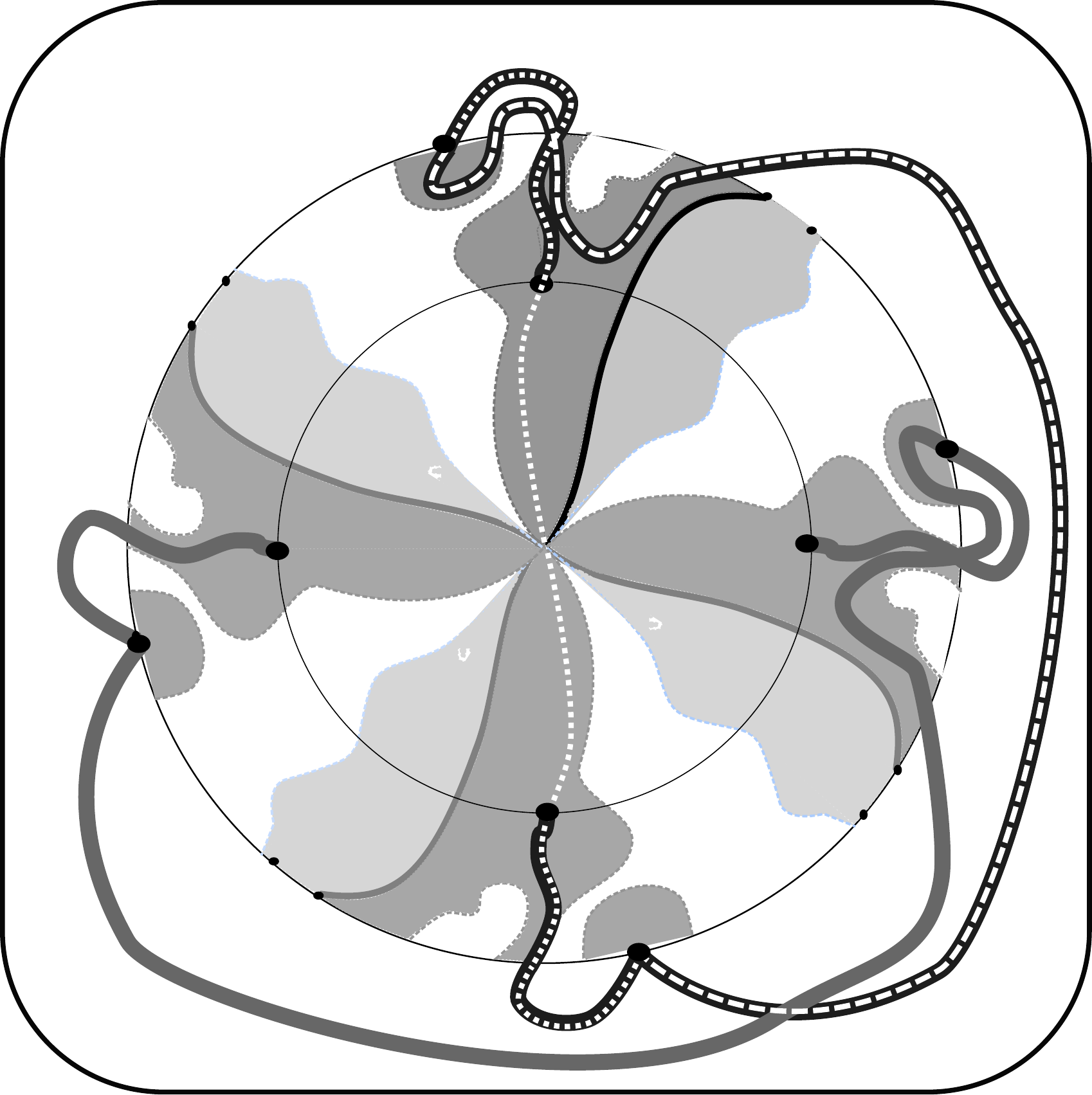
  \caption{$h=4$, $D_0^+=D_2^+$}\label{FigurePrima}
\end{figure}

Then by construction and by symmetry (see Figure \ref{FigurePrima} where $k=0$, $j=2$) the curves $$\Gamma^+:=\{\gamma^+_0(t),t>t_\rho\}\cup\varphi^+\cup\{\gamma^+_j(t),t>t_\rho\}$$ and $g(\Gamma^+)$ are two piecewise regular curves connecting respectively $a_0^+:=\gamma^+_0\cap\partial B_\rho$ with $a_j^+:=\gamma^+_j\cap\partial B_\rho$ in $D^+_0\setminus B_\rho$ and $a_{1}^+:=\gamma^+_{1}\cap\partial B_\rho$ with $a_{j+1}^+:=\gamma^+_{j+1}\cap\partial B_\rho$ in $D^+_{1}\setminus B_\rho$.
Since the points $a_0^+,a_{1}^+,a_j^+,a_{j+1}^+$ are ordered on $\partial B_\rho$ (namely there exists $\theta_0^+\in\R^+$ such that $a^+_i=\rho e^{i\theta_i^+}$, where $\theta^+_i:=\theta^+_0+\frac{2\pi i}{h}$, $i\in\{0,1,j,j+1\}$, and $\theta_0^+<\theta_{1}^+<\theta_j^+<\theta^+_{j+1}<\theta^+_0+2\pi$), it follows that $\Gamma^+ \cap g(\Gamma^+)\neq\emptyset$. Thus $D^+_0=D^+_{1}$ and we are back to the case in which $j=k+1$.\\

So we have proved that $D^+_0=D^+_1=\ldots = D^+_{h-1}$. Next we show that this implies that the sets $D^-_i$, $i=0, \ldots, h-1$ are pairwise disjoint, so that we again reach  a contradiction because $u_p$ would have at least $h+1\geq 5$ nodal domains and this concludes the proof. \\

So it remains to prove that if $D^+_0=D^+_1=\ldots = D^+_{h-1}$ then the sets $D^-_i$, $i=0, \ldots, h-1$, must be pairwise disjoint.

\begin{figure}[htb]
  \centering
  \def\svgwidth{270pt}
  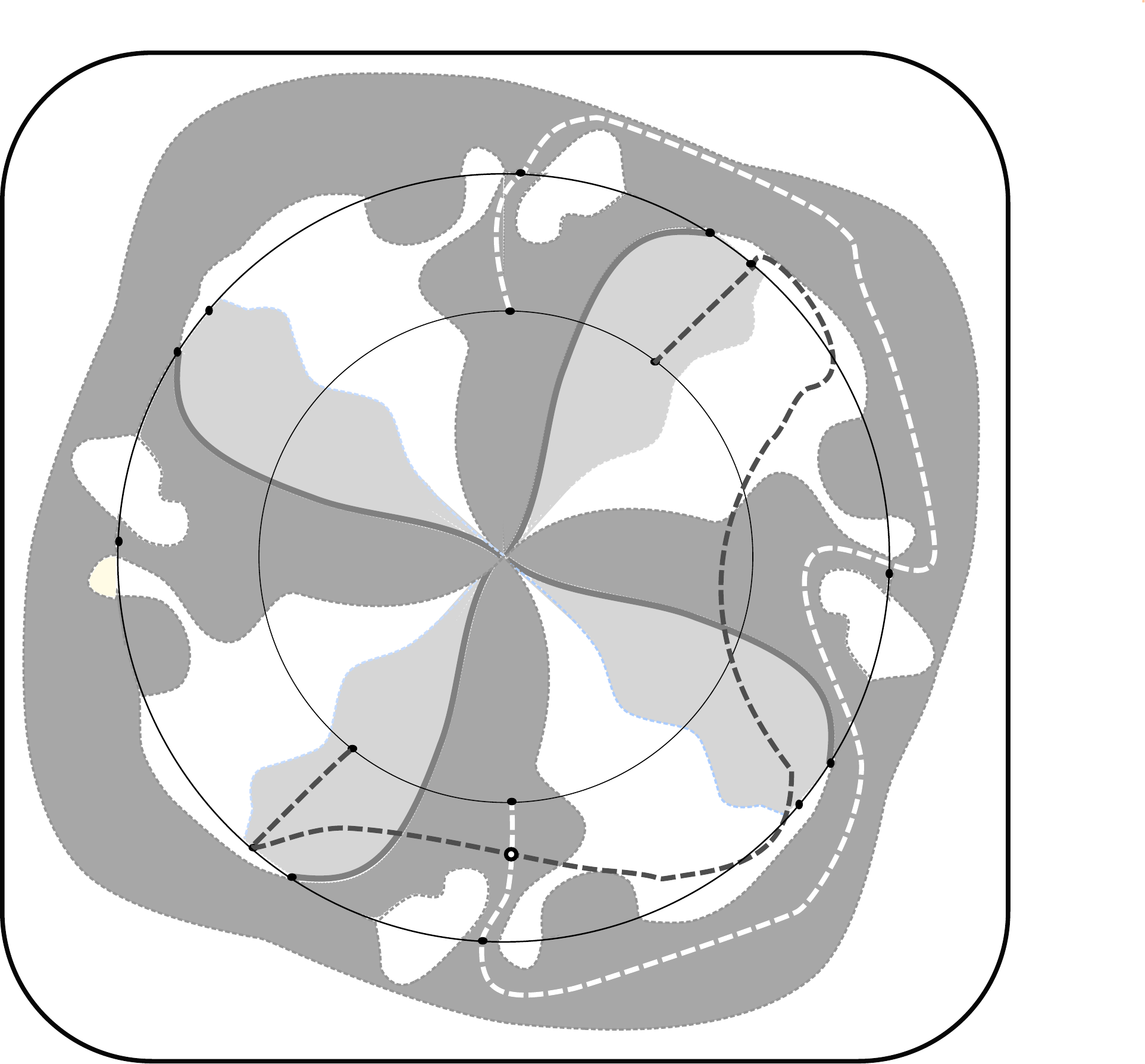
  \caption{$h=4$, $D_0^+=D_1^+=\ldots = D_{h-1}^+$}\label{FigureSeconda}
\end{figure}
Let us suppose by contradiction that $D^-_k=D^-_j$ for some $k<j$ and as before we assume without loss of generality that $k=0$. Since by assumption we also have $D^+_0=D^+_j$ then, similarly as before, we deduce the existence of $\rho\in(0,r)$ such that we can construct (exploiting $\gamma^\pm_0$ and $\gamma^\pm_j$) two piecewise regular curves $\Gamma^\pm\subset(D^\pm_0\setminus B_\rho)$ joining respectively $a^\pm_0:=\gamma^\pm_0\cap B_\rho$ with $a^\pm_j:=\gamma^\pm_j\cap B_\rho$. By definition of $a^\pm_i$ and we derive the existence of $\theta^\pm_0\in\R^+$, $0<|\theta^+_0-\theta^-_0|<\tfrac{2\pi}{h}$, such that $a^\pm_0=\rho e^{i\theta^\pm_0}$ and $a^\pm_j=\rho e^{i\theta^\pm_j}$, where $\theta^\pm_j:=\theta^\pm_0+\tfrac{2\pi j}{h}$. If without loss of generality we assume $\theta_0^+<\theta_0^-$, then we get that $a^+_0,a^-_0,a^+_j,a^-_j$ are ordered on $\partial B_\rho$ in the following sense: $\theta^+_0<\theta^-_0<\theta^+_j<\theta^-_j<\theta^+_0+2\pi$. In turn this implies that $\Gamma^+\cap\Gamma^-\neq\emptyset$ since $u_p<0$ on $\Gamma^+$ while $u_p>0$ on $\Gamma^-$ (see Figure \ref{FigureSeconda}).
\end{proof}

\ble\label{lemmaB}
Let $u_p$ be a $G$-symmetric sign-changing solution to \eqref{problem} with at most four nodal regions, then each nodal region is $G$-symmetric.
\ele

\begin{proof}
By Lemma \ref{lemmaA} we have that $O$ is in the interior of a nodal domain $\Omega_0$.\\
It is easy to show that $\Omega_0$ is $G$-symmetric, indeed if this is not the case there exists $x\in\Omega_0$ and $k\in \{1,\ldots,h-1\}$ such that $g^k(x)\notin\Omega_0$. Since $\Omega_0$ is connected then there exists a piecewise regular curve $\gamma_0\subset\Omega_0$ joining $O$ with $x$. By symmetry the curve $g^k(\gamma_0)$ connects $O$ with $g^k(x)$ and $u_p(g^k(\gamma_0(t))=u_p(\gamma_0(t))\neq0$ for any $t\in[0,1]$. So the union of the two curves joins $x$ with $g^k(x)$ and is contained in $\Omega_0$, which is a contradiction.

\

Let us consider now a nodal domain $D$ of $u_p$ in $\Omega\setminus\Omega_0$ and we assume by contradiction that it is not $G$-symmetric, namely that there exists $j\in\{1,\ldots,h-1\}$ and $x\in D$ such that $g^j(x)\notin D$. Note that without loss of generality we can take $j=1$, otherwise $g(x)\in D$ and by the action of the group $g^j(x)\in D$ for any $j$. We can divide the proof in two possible cases.

\

\emph{Case 1.}
If for any $i\in\{1,\ldots,h-1\}$ $g^i(x)\notin D$, then the sets $g^i(D)$, $i\in\{0,\ldots,h-1\}$, are $h$ disjoint nodal regions in which $u_p$ has the same sign, hence $u_p$ has at least $h+1\geq 5$ nodal regions which is a contradiction against the assumption.

\

\emph{Case 2.}
If there exists $k\in\{2,\ldots,h-1\}$ such that $g^{k}(x)\in D$, then either $g^k$ generates $C_h$ or $g^k$ generates a proper subgroup $C_\ell$ of $C_h$, $0<\ell<h$. In the first case we reach a contradiction since $g(x)=g^{km}(x)\in D$ for a certain $m\in\N$. In the second case we consider a piecewise regular curve $\gamma_0$ in $D$, connecting $x$ with $g^k(x)$ and define a new curve $\gamma:=\gamma_0\cup g^k(\gamma_0)\cup\ldots\cup g^{\ell k}(\gamma_0)$. By symmetry this is a closed curve in $D$, around the origin.

Of course also $g(\gamma)$ is a closed curve around the origin. Then $g(\gamma)\cap\gamma\neq\emptyset$ otherwise $\gamma$ would lie on one side with respect to $g(\gamma)$ and then the diameter of $\gamma$ would be different from the diameter of $g(\gamma)$ which is impossible since $g$ is an isometry in $\R^2$. Hence $g(\gamma)$ and $\gamma$ intersect and this implies that $g(x)$ belongs to $D$ which is a contradiction.
\end{proof}

\ble\label{lemmaC}
Let $u_p$ be a $G$-symmetric sign-changing solution to \eqref{problem} with at most four nodal regions, then its nodal line does not touch the boundary.
\ele

\begin{proof}
Let us suppose by contradiction that there exists a solution $u_{p}$ of \eqref{problem} having the nodal set which touches the boundary, namely $$\mathcal{Z}_{p,0}:=\overline{ \mathcal{Z}_{p}}\cap\partial \Omega=\emptyset.$$

\

To reach the contradiction, we will show, exploiting the symmetry, that if ${u_{p}}$ had a nodal line touching the boundary, then it would have at least $\ns$ nodal regions.

\

We fix a smooth parametrization $\phi:[0,2\pi)\to\partial \Omega$ of the boundary of $\Omega$ and henceforth, given two points $x,y\in\partial \Omega$ (such that $\phi^{-1}(x)<\phi^{-1}(y)$), we will keep the following notation:
$$
\arcup{xy}:=\set{\phi(\theta)\,|\,\phi^{-1}(x)<\theta<\phi^{-1}(y)}.
$$

Let us notice that $\zpo$ cannot contain any open subset of $\partial  \Omega$, namely
\bel\label{zpopti}
\not\exists\;\; y_1,\,y_2\in\partial \Omega\textnormal{  such that  }\arcup{y_1 y_2}\subset\zpo.
\eel

Indeed, if for some $y_1,\,y_2\in\partial \Omega$ $\arcup{y_1 y_2}\subset\zpo$, then we would have that
$$\arcup{y_1 y_2}\subset\partial\{{u_{p}(x)}>0,\,x\in \Omega\}\textnormal{ and }\arcup{y_1 y_2}\subset\partial\{{u_{p}}<0,\,x\in \Omega\},$$ but this is impossible by Hopf Lemma. Anyway, this also follows by results on the nodal line of \cite{HW} and \cite{HHT}.

\

From the latter considerations and from the closeness of $\zpo$ we deduce the existence of $0\leq\theta_m<\theta_M<2\pi$ such that
\bel\label{thetam}
\arcup{{\phi}(\theta_m){\phi}(\theta_M)}\cap\mathcal{Z}_{p,0}=\emptyset\textnormal{ and  }{\phi}(\theta_m),{\phi}(\theta_M)\in\mathcal{Z}_{p,0}.
\eel
It will be convenient to assume, up to a reparametrization, that $\theta_m=0$.

\

Let us consider
\be
a_0:={\phi}(0),\;a_1:=g({\phi}(0)),\;a_2:=g^2({\phi}(0)),\ldots,\;a_{h-1}:=g^{h-1}({\phi}(0)),
\ee
and
\be
b_0:={\phi}(\theta_M),\;b_1:=g({\phi}(\theta_M)),\;b_2:=g^2({\phi}(\theta_M)),\ldots,\;b_{h-1}:=g^{h-1}({\phi}(\theta_M)).
\ee
By the symmetry of $u_{p}$ for any $i\in\set{0,\ldots,h-1}$, $a_i$, $b_i\in\mathcal{Z}_{p,0}$ and $\arcup{a_i b_i}\cap\zpo=\emptyset$.

\

We will denote by $D$ the nodal domain of $u_p$ having $\arcup{a_0b_0}$ in its boundary. By Lemma \ref{lemmaB} $D$ is $G$-symmetric and therefore it has also $\arcup{a_1 b_1},\ldots,\arcup{a_{h-1}b_{h-1}}$ in its boundary.\\
Moreover $b_0$ necessarily belongs to the boundary of two disjoint nodal regions where $u_p$ has different sign, because otherwise, since $b_0\in\zpo$, $b_0$ would be on the boundary of two disjoint nodal regions in which $u_p$ has the same sign, but this is impossible by the strong maximum principle. The same applies to the points $b_1,\ldots,b_{h-1}$.

\begin{figure}[htb]
  \centering
  \def\svgwidth{200pt}
  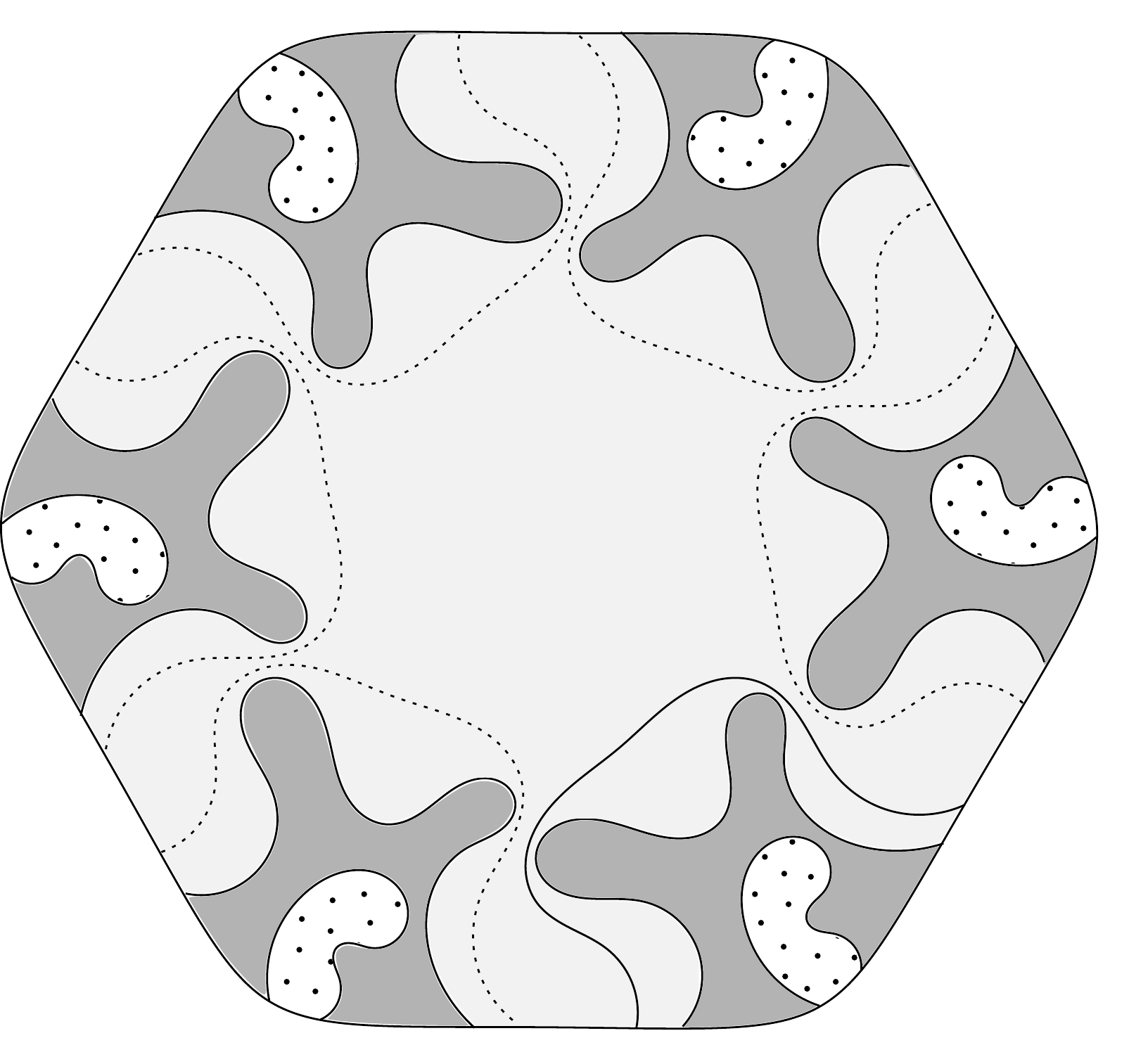
  \caption{The $D_i$'s are pairwise disjoint.}\label{FigureC}
\end{figure}

Thus, if ${u_{p}}|_D>0$ (respectively $<0$), for any $i=0,\ldots,h-1$ there exists an open connected nodal region $D_i$ having $b_i$ in its boundary and in which ${u_{p}}$ is negative (respectively positive). On the one hand by Lemma \ref{lemmaB} all the $D_i$'s coincide, being $G$-symmetric, but on the other hand it is possible to connect in $D$ a point $d_0$ of $\arcup{a_0b_0}$ with a point $c_1$ of $\arcup{a_1b_1}$ by a piecewise regular curve $\gamma$ (and analogously, for any $i\in{1,\ldots,h-1}$, $g^i(d_0)$ with $g^i(c_1)$ by $g(\gamma)$), and thus exploiting the properties of $\R^2$ the $D_i$'s turn out to be pairwise disjoint (see Figure \ref{FigureC}). This contradiction concludes the proof.
\end{proof}

\begin{remark}\label{rmkLemmi}
We point out that the last two lemmas hold true also for a domain $\Omega$ of the following form: $\Omega=\Omega_1\setminus\Omega_0$, where $O\in\Omega_0$ and $\Omega_0$, $\Omega_1$ are $G$-symmetric.
\end{remark}

\begin{proof}[Proof of Theorem \ref{teoremaPrincipale}]

Let us start by defining
\bel\label{u1u2primo}u_1:=u_{p,1,\bar\alpha}\mbox{ and } u_2:=u_{p,2,\bar\alpha},
\eel
where $u_{p,1,\bar\alpha}$, $u_{p,2,\bar\alpha}$  are the functions defined in Corollary \ref{PropositionEnergyUpperBound}.
With this choice of $u_1$ and $u_2$ we consider the set $\mathcal O_p(u_1,u_2)$ defined in \eqref{O_p}.
Then, by the results in the previous section and in particular by Theorem \ref{theoremSolution}
we have that there exist $t_1,t_2\in\mathbb R$, $t_1\cdot t_2 <0$ such that taking the initial condition
$v_{p,0}=t_1u_1+t_2u_2,$  the corresponding solution $v_p(x,t)$ of \eqref{ParabolicProblem} is global in time, sign changing for every fixed time, the $\omega$-limit set $\omega(v_{p,0})$ is nonempty  and any function $u_p\in\omega(v_{p,0})$ is a nodal solution of \eqref{problem}.

\

Now with this special choice of the functions $u_1$ and $u_2$, using the energy estimates proved in Section \ref{SectionEnergyEstimates}, we show that if $p$ is sufficiently large, any function $u_p\in\omega(v_{p,0})$ has at most four nodal domains.
Indeed since the energy is nonincreasing along trajectories,
it satisfies
 \begin{equation}E_p(u_p)\leq E_p(v_{p,0}) .\end{equation}

Thus, thanks to the energy estimate given by Corollary \ref{PropositionEnergyUpperBound} it follows that
for any $\epsilon >0$ there exists $p_{\epsilon}$ such that  for $p\geq p_{\epsilon}$
\begin{equation} \label{stimaEnergiaSoluzioneStep1} pE_p(u_p)\leq  4.97\ \cdot 4\pi e +\ \epsilon .\end{equation}
Finally, \eqref{stimaEnergiaSoluzioneStep1} combined with the energy estimate in Proposition \ref{PropositionEnergyLowerBound} implies that $u_p$ has at most four nodal domains if $p$ is sufficiently large.

\

It is worth to point out that if the initial datum $v_{p,0}$ of \eqref{ParabolicProblem} is $G$-symmetric, then, by the uniqueness of the solution (see Proposition \ref{prop:localex}), the solution $v_p(x,t)$ of \eqref{ParabolicProblem} (for every fixed time) as well as any function in $\omega(v_{p,0})$ turns out to be $G$-symmetric \\
By this remark we immediately deduce that the solution $u_p$ we have found is $G$-symmetric and hence, by Lemma \ref{lemmaA} and Lemma \ref{lemmaC}, its nodal line does not contain the origin and does not touch the boundary.

\

Thus if $u_p$ has two nodal domains the proof is complete.

Otherwise, if $u_p$ has more than two nodal domains we will choose a new $G$-symmetric initial datum - which will be nothing but a restriction of $u_p$ to a subset of $\Omega$ given by two nodal regions - and we will restart the procedure.\\

More precisely let us first observe that in each nodal region $D^i_p$ $i\in\{1,\ldots,k\}$, $k=3$ or $k=4$, the lower energy bound given by Proposition \ref{PropositionEnergyLowerBound} holds, i.e.
\bel\label{lowerbound2}
p E_p(u_p\chi_{D^i_p})\geq 4\pi e-\ep
\eel
for any $\ep>0$, if $p$ is sufficiently large.

By Lemma \ref{lemmaB} we know that each nodal region is $G$-symmetric. We then choose two nodal regions, say $D^1_p$ and $D^2_p$, where $u_p$ has different sign and consider the functions
\bel\label{u1u2secondo}
u_1:= u_p\chi_{D^1_p}\mbox{  and  }u_2:= u_p\chi_{D^2_p}
\eel
which are obviously $G$-symmetric. With this new pair of functions, having obviously disjoint supports, we repeat the same argument applied at the beginning of this proof to the functions defined in \eqref{u1u2primo}. Thus, we obtain the existence of another $G$-symmetric, sign-changing initial condition $v_{p,0}=t_1 u_1+t_2 u_2$, such that the corresponding solution $v_p$ of \eqref{ParabolicProblem}, is global in time, $G$-symmetric and sign changing for every fixed time, the $\omega$-limit set $\omega(v_{p,0})$ is nonempty and any function $\tilde u_p\in\omega(v_{p,0})$ is a nodal solution to \eqref{problem} which satisfies by \eqref{lowerbound2}
\begin{eqnarray}                              \label{stimaEnergiaSoluzioneStep2}
pE_p(\tilde u_p)&\leq&pE_p(t_1u_1+t_2u_2)\nonumber\\
&\leq &pE_p(u_1)+pE_p(u_2)\nonumber\\
&=& pE_p(u_p)-\sum_{i=3}^k pE_p(u_p\chi_{D_p^i})\nonumber\\
&\leq&  4.97\ \cdot 4\pi e +\ \epsilon -\sum_{i=3}^k pE_p(u_p\chi_{D_p^i})\nonumber\\
&\leq& 4.97\ \cdot 4\pi e  -\sum_{i=3}^k 4\pi e +\ \epsilon
\end{eqnarray}
where $k\in\{3,4\}$ is the total number of nodal regions of $u_p$.

\

Then if $k=4$, again by \eqref{lowerbound2}, we get that $\tilde u_p$ has two nodal regions and it is the required solution of \eqref{problem}. If $k=3$ $\tilde u_p$ may have $3$ nodal regions and so, if this is the case, we restart the same procedure by considering the restriction of $\tilde u_p$ to only two nodal regions and get our final assertion.
\end{proof}

\

\

\

We conclude proving Theorem \ref{teoremaMorseIndex}.
\begin{proof}[Proof of Theorem \ref{teoremaMorseIndex}]
In the first part of this proof we will follow closely the paper \cite{AftalionPacella}, where it is proved an analogous result in the case of a ball or an annulus. For sake of clarity we repeat briefly the argument.

Without loss of generality we assume that the symmetry line is the $x_2$-axis
$$T=\set{x=(x_1,x_2)\in\R^2, x_1=0}$$
and we consider the half-domains
$$
\Omega^-=\set{x=(x_1,x_2)\in\Omega, x_1<0}, \qquad \Omega^+=\set{x=(x_1,x_2)\in\Omega, x_1>0}.
$$
We denote by $L$ the linearized operator, $L=-\lap-p|u_p|^{p-1}$ and by $\lambda_k$ the eigenvalues of $L$ in $\Omega$ with homogeneous Dirichlet boundary conditions. Moreover, let $\mu$ be the first eigenvalue of $L$ in $\Omega^-$, namely $\mu$ is such that there exists a solution $\psi$  to
\be
-\lap \psi-p|u_p|^{p-1}\psi=\mu\psi\quad\textnormal{in $\Omega^-$,}\qquad \psi=0\quad\textnormal{on $\partial\Omega^-$,}\qquad \psi>0\quad\textnormal{on $\partial\Omega^-$}.
 \ee
Proposition 2.1 of \cite{AftalionPacella} implies that the odd extension of $\psi$ to $\Omega$, defined by
$$
\tilde{\psi}(x_1,x_2):=\left\{ \begin{array}{lr}\psi(x_1,x_2) & \mbox{ if }(x_1,x_2)\in\Omega^-
\\-\psi(-x_1,x_2) & \mbox{ if }  (x_1,x_2) \in \Omega^+\end{array}\right.
$$
is an eigenfunction for the linearized operator $L$ in $\Omega$ with corresponding eigenvalue $\mu$. Hence $\mu=\lambda_\beta$, with $\beta\geq2$.

\

Next we consider the partial derivative $\tfrac{\partial u_p}{\partial x_1}$ in $\Omega^-$ and we observe that, by the symmetry of $u_p$ in the $x_1$ direction, $\tfrac{\partial u_p}{\partial x_1}=0$ on $\partial \Omega^-\cap T$. Furthermore, since the nodal line of $u_p$ does not touch the boundary, we can assume that $u_p>0$ near $\partial \Omega$ so that the convexity of $\Omega$ in the $x_1$ direction implies that $\tfrac{\partial u_p}{\partial x_1}\geq0$ on $\partial \Omega^-\cap\partial \Omega$. Thus $\tfrac{\partial u_p}{\partial x_1}$ solves
\bel\label{problemEigenvD}
-\lap(\frac{\partial u_p}{\partial x_1})-p|u_p|^{p-1}\frac{\partial u_p}{\partial x_1}=0\quad \textnormal{in $\Omega^-$,}\qquad\frac{\partial u_p}{\partial x_1}\geq0\quad\textnormal{on $\partial\Omega^-$.}
\eel\\
Besides, since $u_p$ is sign-changing, also $\tfrac{\partial u_p}{\partial x_1}$ must change sign and be negative somewhere in $\Omega^-$. Therefore there exists a connected region $D$ strictly contained in $\Omega^-$ such that $\tfrac{\partial u_p}{\partial x_1}<0$ in $D$ and $\tfrac{\partial u_p}{\partial x_1}=0$ on $\partial D$.\\
Hence, the first eigenvalue of $L$ in $D$ is zero and so the first eigenvalue $\mu$ in $\Omega^-$ is negative. Since $\lambda_1<\lambda_\beta=\mu<0$ we have already obtained two negative eigenvalues of $L$. We denote by $\phi_1$ and $\phi_\beta$, respectively, the positive eigenfunction corresponding to the first eigenvalue and the eigenfunction corresponding to $\lambda_\beta$, obtained as the odd extension of the first eigenfunction of $L$ in $\Omega^-$. Clearly $\phi_\beta$ is sign-changing and is not $G$-symmetric.

\

On the other hand if we restrict our attention to the subspace $H^G$ of $H^1_0(\Omega)$ of $G$-invariant functions, we see that $L^G:=L|_{H^G}$ has at least two negative eigenvalues ($\lambda_1^G$, $\lambda_2^G$) being $u_p$ a sign-changing solution in $H^G$. These eigenvalues cannot be equal to $\lambda_\beta$: the first one $\lambda_1^G$ for obvious reasons and $\lambda_2^G$ because its eigenfunction is $G$ symmetric while the eigenfunction corresponding to $\lambda_\beta$ does not changes sign in $\Omega^-$ and is odd with respect to $T$. This proves that $m(u_p)\geq3$.
\end{proof}

\begin{remark}
Note that if $\Omega$ is symmetric with respect to two orthogonal lines $T_1$, $T_2$ in $\R^2$ then $m(u_p)\geq 4$ since the same proof of Theorem \ref{teoremaMorseIndex} yields the existence of two negative eigenvalues of $L$, whose corresponding eigenfunctions are odd with respect to $T_1$ or $T_2$ and have only two nodal regions.
\end{remark}

\end{document}